\def\R{\mathbb R}
\def\N{\mathbb N}
\def\e{\varepsilon}
\def\trait (#1) (#2) (#3){\vrule width #1pt height #2pt depth #3pt}
\def\fin{\hfill\trait (0.1) (5) (0) \trait (5) (0.1) (0) \kern-5pt
\trait (5) (5) (-4.9) \trait (0.1) (5) (0)}
\numberwithin{equation}{section}
\newcommand{\be}{\begin{equation}}
\newcommand{\ee}{\end{equation}}
\newcommand{\baa}{\begin{array}}
\newcommand{\eaa}{\end{array}}
\newcommand{\ba}{\begin{eqnarray}}
\newcommand{\ea}{\end{eqnarray}}
\newcommand{\ban}{\begin{eqnarray*}}
\newcommand{\ean}{\end{eqnarray*}}
\newtheorem{theo}{\bf Theorem}[section]
\newtheorem{lem}[theo]{\bf Lemma}
\newtheorem{pro}[theo]{\bf Proposition}
\newtheorem{defi}[theo]{\bf Definition}
\theoremstyle{remark}
\newtheorem{rem}[theo]{\bf Remark}
\newtheorem{example}[theo]{\bf Example}
\def\ds{\rightarrow}
\def\e{\varepsilon}
\def\Ac{{\cal A}}
\def\Tc{{\cal T}}
\def\Up{\mathbf{U}^+}
\def\Um{\mathbf{U}^-}
\def\apg{\left\{}
\def\chg{\right\}}
\def\apt{\left(}
\def\cht{\right)}
\def\ch{\right.}
\def\a{a}
\def\A{{\cal A}}
\def\HTreg{{ H}^{\rm reg}_T}
\def\AHregx{A_\H^{\rm reg}(x)}
\def\eps{\varepsilon}
\renewcommand{\H}{\mathcal{H}}
\newcommand{\mB}{\mathcal{B}}
\newcommand{\cob}{\overline{\mathop{\rm co}}}
\def\xe{x_\eps}
\def\HT{H_T}
\def\xe{x_{\gamma}}
\def\xne{(x_{\gamma})_N}
\def\ye{y_{\gamma}}
\def\ype{y'_{\gamma}}
\def\yne{(y_{\gamma})_N}
\newcommand{\T}{\mathcal{\Im}}
\newcommand{\Uim}{\mathbf{U}^\mathrm{FL}}
\newcommand{\ind}[1]{\mathds{1}_{{#1}}}
\newcommand{\mT}{\mathcal{T}}
\renewcommand{\d}{\,\mathrm{d}}
\def\ds{\rightarrow}
\newcommand{\Ii}{\mathcal{I}_{i}}
\newcommand{\I}{\mathcal{I}}
\newcommand{\ueta}{u_\eta}
\newcommand{\bx}{\bar x}
\newcommand{\Hti}{\tilde{H}}
\newcommand{\Htireg}{\tilde H^{\rm reg}}
\newcommand{\Honemin}{\underline{H}_1}
\newcommand{\Htwomin}{\underline{H}_2}
\begin{document}

\title{Flux-limited and classical viscosity solutions for regional control problems}

\author{G.Barles, A. Briani, E. Chasseigne\thanks{Laboratoire de
    Math\'ematiques et Physique Th\'eorique (UMR CNRS 7350),
    F\'ed\'eration Denis Poisson (FR CNRS 2964), Universit\'e
    Fran\c{c}ois Rabelais, Parc de Grandmont, 37200 Tours,
    France. Email: Guy.Barles@lmpt.univ-tours.fr,
    Ariela.Briani@lmpt.univ-tours.fr,
    Emmanuel.Chasseigne@lmpt.univ-tours.fr.  } , C. Imbert\thanks{CNRS
    \& D\'epartement de mathématiques et applications (UMR CNRS 8553),
    \'Ecole Normale Sup\'erieure (Paris), 45 rue d'Ulm, 75230 Paris
    cedex 5, France. Email: Cyril.Imbert@ens.fr\newline
    \indent This work was partially supported by the ANR project HJnet (ANR-12-BS01-0008-01)} 
}
\maketitle
 \noindent {\bf Key-words}: Optimal control, discontinuous dynamic, Bellman Equation, flux-limited solutions, viscosity solutions.
\\
{\bf MSC}:
49L20,   
49L25,   
35F21. 

\begin{abstract} The aim of this paper is to compare two different
  approaches for regional control problems: the first one is the
  classical approach, using a standard notion of viscosity solutions,
  which is developed in a series of works by the three first
  authors. The second one is more recent and relies on
  ideas introduced by Monneau and the fourth author for problems set
  on networks in another series of works, in particular
  the notion of flux-limited solutions. After describing and even
  revisiting these two very different points of view in the simplest
  possible framework, we show how the results of the classical
  approach can be interpreted in terms of flux-limited solutions. In
  particular, we give much simpler proofs of three results: the
  comparison principle in the class of bounded flux-limited solutions
  of stationary multidimensional Hamilton-Jacobi equations and
 the
  identification of the maximal and minimal Ishii's solutions with
  flux-limited solutions which were already proved by Monneau and the
  fourth author, and the identification of the corresponding vanishing
  viscosity limit, already obtained by Vinh Duc Nguyen and the fourth
  author.
\end{abstract}

\section{Introduction}

Recently, a lot of works have been devoted to the study of
deterministic control problems involving discontinuities and, more
precisely, problems where the dynamics and running costs may be
completely different in different parts of the domain. In fact, these
problems can be of different natures: first, they may only deal with
``simple'' discontinuities of codimension $1$ like in \cite{DeZS},
\cite{GS1,GS2}, \cite{So}; the first three authors provide in
\cite{BBC1,BBC2} a systematic study of such problems and we describe
these results below. Second, following Bressan \& Hong \cite{BrYu},
other results are concerned with problems in ``stratified domains'',
where the discontinuities can be of any codimension; we refer to
\cite{AEYW} for a new and simpler approach of these problems, with new
results.  Third, they are problems set on networks for which the
specified methods are required since such singular domains are not
necessarily contained in $\R^N$; we refer to \cite{ACCT}, \cite{IMZ},
\cite{ScCa}, \cite{IM}, \cite{IM-md}, \cite{IN} \cite{LiSo}, for different
approaches of such networks problems.

The aim of this article is to compare the different approaches used in
these articles, and in particular the ones of \cite{BBC1,BBC2} and
\cite{IM,IM-md}. Indeed, this link is only presented
  in the mono-dimensional setting in \cite{IM}; see also
  \cite{IM-md}.  In order to provide the clearest possible picture,
we consider the simplest possible case, namely the case of two
half-spaces in $\R^N$, say $\Omega_1:=\{x=(x_1,\cdots,x_N); x_N>0\}$
and $\Omega_2:=\{x=(x_1,\cdots,x_N); x_N<0\}$ and we also choose below
the most simple assumptions on either the control problem or the
Hamilton-Jacobi Equations (controllability or coercivity). In the same
line, we restrict ourselves to the case of stationary Hamilton-Jacobi
equations, corresponding to infinite-time horizon control problems
(with actualization factor $\lambda=1$).

The first key step, and this is one major difference in the above
mentioned works, is to identify the questions we are interested in
and/or the methods we are able to use. This is where the fact to be in
$\R^N$ or on a network changes completely the point of view. In
\cite{BBC1,BBC2}, the key questions were the following. First,
consider the equations
\begin{equation}\label{HJ1}
u+H_1(x,Du) = 0 \quad\hbox{in  }\Omega_1\; ,
\end{equation}
\begin{equation}\label{HJ2}
u+H_2(x,Du) = 0 \quad\hbox{in  }\Omega_2\; ,
\end{equation}
then the classical Ishii's definition of viscosity solutions implies that 
we have ``natural junction conditions'' on $\H:= \overline{\Omega}_1 \cap
\overline{\Omega}_2 = \apg  x \in \R^N  \: : \: x_N=0 \chg$ which read
\begin{equation}\label{HJ-subH}
\min(u+H_1(x,Du),u+H_2(x,Du)) \leq 0 \quad\hbox{on  }\H\; ,
\end{equation}
\begin{equation}\label{HJ-superH}
\max(u+H_1(x,Du),u+H_2(x,Du)) \geq 0 \quad\hbox{on  }\H\; .
\end{equation}
Indeed, if $H$ is the Hamiltonian defined by
$$ H(x,u,p):= \left\{\begin{array}{ll}u+H_1(x,p) & \hbox{if $x\in \Omega_1$}\\
u+H_2(x,p) & \hbox{if $x\in \Omega_2$}\end{array}\right.$$
then the above inequalities are nothing but $H_*\leq 0$ and $H^* \geq 0$ on $\H$.
Unfortunately, these junction conditions are not enough to ensure 
 uniqueness and there may (and in general do) exist several Ishii's discontinuous solutions.

The first question which is addressed in \cite{BBC1,BBC2} is to define
properly a control problem where the dynamics and running cost are
different in $\Omega_1$ and $\Omega_2$. The main problem concerns the
controlled trajectories which may stay on $\H$: how to properly define
them and do they lead to the junction conditions
\eqref{HJ-subH}-\eqref{HJ-superH}?  Then the next question is to
identify the maximal and minimal solutions of
\eqref{HJ1}-\eqref{HJ2}-\eqref{HJ-subH}-\eqref{HJ-superH} when $H_1$,
$H_2$ are Hamiltonians of control problems (see
Theorem~\ref{thm:Um.Up} at the end of Section~\ref{sect:BBC}). A key
remark on these results is that the use of differential inclusions
methods leads on $\H$ to a mixing of the dynamics-costs of $\Omega_1$
and $\Omega_2$ and this is actually (depending on the type of mixing
one allows) how the maximal and minimal solutions of
\eqref{HJ1}-\eqref{HJ2}-\eqref{HJ-subH}-\eqref{HJ-superH} are defined.
This approach (refered below as CVS $=$ classical viscosity solutions'
approach) is described in Section~\ref{sect:BBC} with the main
results.

In the network framework, the question of how to define the junction
condition(s) becomes more central since the definition of
  classical Ishii's definition of viscosity solutions is not
  straightforward in the general case.  Such a difficulty is related
  to another important difference (which is not addressed at all in
\cite{BBC1,BBC2}) which is the choice of the set of test-functions:
while in $\R^N$, even with the discontinuities on $\H$, the choice of
test-functions which are $C^1$ in $\R^N$ is natural, this choice makes
no sense in the network framework where the ``natural'' set of
test-functions is the set of functions which are $C^1$ on
each branch and continuous at the junctions. Here, if test-functions
are chosen to be continuous in $\R^N$, $C^1$ in $\Omega_1$ and
$\Omega_2$ and to have a trace on $\H$ which is $C^1$ on $\H$
(allowing a jump on the $x_N$-derivative), the question is: what does
this change in the \cite{BBC1,BBC2} picture?

In order to answer this question, we first describe the flux-limited
solution approach (FL-approach in short) consisting in
adding a junction condition $G$ on $\H$. It can be seen as being
associated to a particular control problem on $\H$. This function $G$
is called the \emph{flux limiter} in \cite{IM,IM-md}. Compared to
\cite{BBC1,BBC2}, this approach is more PDE-oriented: we give and
comment the definition with test-functions which are just piecewise
$C^1$. Even if it is rather natural from the control point of view, it
turns out to be rather different from the classical Ishii's
definition.

For the FL-approach, we provide a simplified uniqueness proof for the associated
  Hamilton-Jacobi-Bellman Equations obtained in \cite{IM-md}. Instead
  of using the so-called vertex test function (which construction is
  difficult and lengthy), we simply use specific slopes identified in
  \cite{IM,IM-md} (see Lemma~\ref{linkHami} in Appendix) in order to
  construct a simple test function. Indeed, it is explained in
  \cite{IM,IM-md} that a function is a flux-limited solution if it
  satisfies the viscosity inequality on $\H$ only when tested with
  smooth functions whose derivatives at the junction coincide with
  those specific slopes. We do not need such a result about the
  reduction of test functions here but, guided by this idea, we give a
  simpler proof of the comparison principle.  Finally we identify the
value-function ($\Uim_G$) which is the unique solution of this problem
associated to $G$.

The next question is the comparison of the two (apparently very
different) approaches in the multi-dimensional setting: it turns out
that, as in the mono-dimensional setting \cite{IM},
the maximal ($\Up$) and minimal ($\Um$) solutions in the CVS-approach
can be recovered by using the right ``flux limiter'' $G$ (or control
problem) on $\H$: these flux limiters are respectively the
Hamiltonians $H_T^{\rm reg}$ and $H_T$ identified in
\cite{BBC1,BBC2}. We conclude that the FL-approach provides a
completely different way (and with pure PDE methods) to
address the questions solved in \cite{BBC1,BBC2}. Moreover, the choice
of $G$ (in particular the case when there is no such a flux limiter)
allows one to consider different control problems on $\H$ in a more
general way than in \cite{BBC1,BBC2}.

Last but not least, this clear understanding on the advantages and
disadvantages of the two points of view for looking at the HJ problem
with discontinuities, allows us to simplify the proof of the
convergence of the vanishing viscosity approximation, a result already
given in \cite{IN}.

The article is organized as follows: in Section~\ref{IM}, we describe
the FL-approach with the simplified comparison proof and the
connection with the related control problem. Then in
Section~\ref{sect:BBC}, we recall the CVS-approach; the two approaches
are compared in Section~\ref{sect:comp.IM.BBC}.  The convergence of
the vanishing viscosity approximation closes the article
(Section~\ref{sect:vanishing}). The appendix contains technical
results which are used in the paper.

\section{Flux-limited solutions}\label{IM}

\subsection{Assumptions and definitions}

We first describe the assumptions on the dynamic and running cost in
each $\Omega_i\ (i=1,2)$ and on $\H$ since they are used to define the
junction conditions. We recall that we use the simplest possible
assumptions and we formulate the problem in the simplest possible way
by assuming that the dynamics and running costs are defined in the
whole space $\R^N$.

On $\Omega_i$, the sets of controls are denoted by $A_i$, the system
is driven by a dynamic $b_i$ and the running cost is given by
$l_i$. We use the index $i=0$ for $\H$.  Our main assumptions are the
following.

\begin{itemize}

\item[{[H0]}] For $i=0,1,2$, $A_i$ is a compact metric space and
  $b_i: \R^N \times A_i \ds \R^N$ is a continuous bounded function,
  more precisely $|b_i(x,\alpha_i)| \leq M_{b}$ for all $x \in \R^N$
  and $\alpha_i \in A_i$, $i=0,1,2$.  Moreover, there exists
  $L_i \in \R$ such that, for any $x,y \in \R^N$ and
  $\alpha_i \in A_i$
$$ |b_i(x,\alpha_i)-b_i(y,\alpha_i)|\leq L_i |x-y|\; .$$

\item[{[H1]}] For $i=0,1,2$, the function
  $l_i: \R^N \times A_i \ds \R^N$ is continuous and
  $|l_i(x,\alpha_i)| \leq M_l$ for all $x \in \R^N$ and
  $\alpha_i \in A_i$, $i=1,2$.
\end{itemize}
\noindent The last assumption is a controlability assumption that we
use only in $\Omega_1\cup\Omega_2$, and not on $\H$.
\begin{itemize}
\item[{[H2]}] For each $x \in \R^N$, the sets
  $\apg (b_i(x, \alpha_i),l_i(x, \alpha_i)) \:: \: \alpha_i \in A_i
  \chg$,
  ($i=1,2$), are closed and convex. Moreover there is a $\delta>0$
  such that for any $i=1,2$ and $x\in\R^N$,
\begin{equation}\label{cont-ass}
\overline{B(0,\delta)} \subset B_i(x):= \apg   b_i(x, \alpha_i)  \: :     \:   \alpha_i \in A_i \chg.
\end{equation}

\end{itemize}

We now define several Hamiltonians. For $x \in \overline{\Omega}_1$
\be  \label{def:Ham1}
H_1(x,p):=\sup_{ \alpha_1 \in A_1} \apg  -b_1(x,\alpha_1) \cdot p  - l_1(x,\alpha_1) \chg\,,
\ee
\be  \label{def:Ham1-}
H_1^-(x,p):=\sup_{ \alpha_1 \in A_1\:: \: b_1(x,\alpha_1) \cdot e_N  \leq  0} \apg  -b_1(x,\alpha_1) \cdot p  - l_1(x,\alpha_1) \chg\,,
\ee
\be  \label{def:Ham1+}
H_1^+(x,p):=\sup_{ \alpha_1 \in A_1\:: \: b_1(x,\alpha_1) \cdot e_N > 0} \apg  -b_1(x,\alpha_1) \cdot p  - l_1(x,\alpha_1) \chg\,,
\ee
and for $x \in \overline{\Omega}_2$
\be  \label{def:Ham2}
H_2(x,p):=\sup_{ \alpha_2 \in A_2} \apg  -b_2(x,\alpha_2) \cdot p  - l_2(x,\alpha_2) \chg\,,
\ee
\be  \label{def:Ham2+}
H^+_2(x,p):=\sup_{ \alpha_2 \in A_2  \:: \: b_2(x,\alpha_2) \cdot e_N \geq 0} \apg  -b_2(x,\alpha_2) \cdot p  - l_2(x,\alpha_2) \chg\,,
\ee
\be  \label{def:Ham2-}
H^-_2(x,p):=\sup_{ \alpha_2 \in A_2  \:: \: b_2(x,\alpha_2) \cdot e_N < 0} \apg  -b_2(x,\alpha_2) \cdot p - l_2(x,\alpha_2) \chg\,.
\ee

Finally, for the specific control problem on $\H$ we define for any $x\in\H$ and
$p_\H\in\R^{N-1}$
\be\label{def:G}
G(x,p_\H):=\sup_{\alpha_0\in\A_0}\{-b_0(x,\alpha_0)\cdot p_\H-l_0(x,\alpha_0)\}\,.
\ee

In the sequel, the points of $\H$ are identified indifferently by
$x'\in\R^{N-1}$ or by $x=(x',0)\in\R^N$.  For the gradient variable we
use the decomposition $p=(p_\H,p_N)$ where $p_\H\in \H= \R^{N-1}$ and
$p_N\in \R$, and, when dealing with a function $u$, we also use the
notation $D_\H u$ for the ${(N-1)}$ first components of the gradient,
i.e.,
\[ 
D_\H u:=(\frac{\partial u}{\partial x_1}, \cdots, \frac{\partial
  u}{\partial x_{n-1}})\quad\hbox{and}\quad Du=\Big(D_\H
u,\frac{\partial u}{\partial x_N}\Big)\,.
\]
Note that, for the sake of consistency of notation, we also denote by
$D_\H u$ the gradient of a function $u$ which is only defined on
$\R^{N-1}$.

Let us remark that, thanks to assumptions [H0], [H1], the Hamiltonians $H_i$,
$H_i^{\pm}$ ($i=1,2$) satisfy the following classical
  structure conditions: for any $R>0$, for any $x,y\in \R^N$ such that $|x|,|y| \leq R$, for any $p,q \in \R^N$ and for $i=1,2$
  \begin{equation}\label{eq:structure}
 \begin{cases} 
|H_i (x,p)-H_i(x,q)| \le M_b |p-q| \\
|H_i(x,p)-H_i (y,p)| \le L_i |x-y|(1+|p|)+m_i^R(|x-y|)\; ,
\end{cases}
\end{equation}
where $m_i^R$ is  a (non-decreasing) modulus of continuity of the function $l_i$ on the compact set $\overline{B(0,R)}\times A_i$.

The assumptions on the function $G$ mimic the assumptions naturally satisfied by $H_1, H_2$.
\begin{itemize}
\item[{[HG]}] The function $G: \H \times \R^{N-1} \rightarrow \R$ is
  continuous and satisfies: for any $x \in \H$, the function
  $p'\mapsto G(x,p'): \R^{N-1} \rightarrow \R$ is convex and there
  exist $C_1 , C_2 >0$ and, for any $R$, a modulus of continuity $m_R^G$ such that, for any $x,y\in \H$ with $|x|,|y| \leq R$,
for any $ p^\prime \in \R^{N-1}$
\[
|G(x, p^\prime )-G(y, p^\prime )| \leq C_1 |x-y|(|p^\prime| +1) m_R^G(|x-y|) \quad
,\quad |G (x,p^\prime)-G(x,q^\prime)| \leq C_2|p^\prime-q^\prime| \: .
\]
\end{itemize} 
 We point out that, because of Lemma~\ref{ss-lip} below, the
coercivity of $G$ is not necessary.

We introduce the following space $\T$ of real valued test-functions:
we say that $\psi \in \T$ if $\psi \in C(\R^N)$ and these exist
$\psi_1 \in C^1(\bar \Omega_1)$, $\psi_2 \in C^1(\bar \Omega_2)$ such
that $\psi = \psi_1$ in $\bar \Omega_1$ and $\psi = \psi_2$ in
$\bar \Omega_2$. Of course, $\psi_1=\psi_2$ and
$D_\H \psi_1= D_\H \psi_2$ on $\H$.

Now we give a definition of sub and supersolution following
\cite{IM,IM-md} for the following problem 
\[ \tag{HJ-FL} 
\begin{cases}
u+H_1(x,Du) = 0 \quad\hbox{in  }\Omega_1\; , \\
u+H_2(x,Du) = 0 \quad\hbox{in  }\Omega_2\; , \\
u+G(x,D_\H u) = 0 \quad\hbox{on  }\H\; .
\end{cases} 
\]
Since in $\Omega_1, \Omega_2$, the definition are just classical
viscosity sub and supersolutions, we only provide the definition on
$\H$.
\begin{defi}[Flux-limited sub and supersolution on $\H$]
  \label{defiNEW} An upper semi-continuous (usc), bounded function
  $u: \R^N \rightarrow \R$ is a \emph{flux-limited subsolution of
    (HJ-FL) on $\H$} if for any test-function $\psi \in \T$ and any
  local maximum point $x \in \H$ of $x \mapsto (u-\psi)(x)$ in $\R^N$,
  we have
\[
  \max \Big( u(x)+G(x,D_\H \psi) ,  u(x)+ H_1^+(x, D\psi_1) ,  
  u(x)+ H_2^-(x, D\psi_2) \Big) \leq 0  \: .
\]
We say that a lower semi-continuous (lsc), bounded function
$v: \R^N \rightarrow \R$ is a \emph{flux-limited supersolution of
  (HJ-FL) on $\H$} if for any function $\psi \in \T$ and any local
mininum point $x \in \H$ of $x \mapsto (v-\psi)(x)$ in $\R^N$, we have
\[
  \max \Big(v(x)+G(x,D_\H \psi) ,   v(x)+ H_1^+(x, D\psi_1) ,  
  v(x)+ H_2^-(x, D\psi_2) \Big) \geq 0  \: .
\]
\end{defi}
\begin{rem}
Let us point out that, in Definition~\ref{defiNEW}, the local extrema are taken
with respect to a neighborhood of $x$ in $\R^N$ and not with respect to a
neighborhood of $x$ in $\H$ as in \cite{BBC1, BBC2, AEYW}. This definition is
``natural'' in the sense that it takes into account dynamics $b_1$ pointing
inward to $\Omega_1$ in $H_1^+$ and in the same way dynamics $b_2$ pointing
inward to $\Omega_2$ in $H_2^-$. This is also why flux-limited subsolutions can exist
since with test-functions in $\T$ and a natural extension of the Ishii's
definition using $\psi_1$ in $H_1$ and $\psi_2$ in $H_2$, we would have no
subsolutions (consider $x \mapsto u(x)-|x|^2/\e^2 -C_\e |x_N|$, for a large
constant $C_\e$). But it can also be noticed that a subsolution of
$u+H_1(x,Du)=0$ in $\Omega_1$ satisfies naturally $u+H_1^+(x,Du) \leq 0$ on
$\H$, the same being true with $H_2$, $\Omega_2$ and $H_2^-$ (see \cite{BBC1}).
\end{rem}

\subsection{Comparison result for flux-limited sub/supersolutions}

The first natural result we provide is the
\begin{lem}[Subsolutions are Lipschitz continuous] 
\label{ss-lip} Assume [H0]-[H2] and [HG]. Any bounded, usc
    flux-limited subsolution of (HJ-FL) is Lipschitz continuous. 
\end{lem}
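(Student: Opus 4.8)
The plan is to prove the bound first in the interior of each half-space, where the equation is a standard coercive Hamilton--Jacobi equation, and then to upgrade it to a global estimate by showing that $u$ is continuous across $\H$ with the two one-sided traces agreeing; the flux-limited junction condition is used only at this final step. I would begin by extracting coercivity from the controllability assumption [H2]: choosing $b_i(x,\alpha_i)=-\delta\,p/|p|\in\overline{B(0,\delta)}\subset B_i(x)$ in \eqref{def:Ham1} and \eqref{def:Ham2} gives $H_i(x,p)\ge \delta|p|-M_l$ for $i=1,2$. Hence at any interior local maximum point $x_0\in\Omega_i$ of $u-\phi$ with $\phi\in C^1$, the subsolution inequality $u(x_0)+H_i(x_0,D\phi(x_0))\le 0$ yields $\delta|D\phi(x_0)|\le M_l-u(x_0)\le M_l+\|u\|_\infty$, so $|D\phi(x_0)|\le L_0:=(M_l+\|u\|_\infty)/\delta$. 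By the classical fact that a bounded usc viscosity subsolution of $|Du|\le L_0$ on a connected open set is $L_0$-Lipschitz, $u$ is $L_0$-Lipschitz on $\Omega_1$ and on $\Omega_2$ with the \emph{same} constant; it therefore extends to $L_0$-Lipschitz traces $\bar u_1$ on $\overline{\Omega}_1$ and $\bar u_2$ on $\overline{\Omega}_2$, with $u=\bar u_i$ in $\Omega_i$ and, by upper semicontinuity, $u\ge\max(\bar u_1,\bar u_2)$ on $\H$.

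Next I would reduce the global statement to the equality $u=\bar u_1=\bar u_2$ on $\H$. Indeed, once this is known, for $x\in\overline{\Omega}_1$ and $y\in\overline{\Omega}_2$ the segment $[x,y]$ meets $\H$ at a point $z$ with $|x-z|+|z-y|=|x-y|$, whence
\[
|u(x)-u(y)|\le |u(x)-u(z)|+|u(z)-u(y)|\le L_0|x-z|+L_0|z-y|=L_0|x-y|,
\]
which together with the one-sided Lipschitz bounds gives the global $L_0$-Lipschitz estimate. So everything reduces to excluding an upward jump at $\H$, i.e. to proving $u(z)\le\bar u_1(z)$ and $u(z)\le\bar u_2(z)$ for every $z\in\H$; by the symmetric role of the two half-spaces it suffices to treat the first inequality, and the argument will simultaneously rule out a disagreement of the two traces.

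The crux, and the step where the junction condition and the test-function class $\T$ enter, is the exclusion of this jump. Suppose, for contradiction, that the traces disagree, say $\bar u_2>\bar u_1$ on a neighbourhood $V\subset\H$ (the case of a genuine spike $u>\max(\bar u_1,\bar u_2)$ is handled the same way). At a suitable point $z_0\in V$ I would touch the trace $\bar u_2$ from above by a $C^1$ function $\phi$ on $\overline{\Omega}_2$ and build a test-function $\psi\in\T$ by setting $\psi=\phi$ on $\overline{\Omega}_2$ and $\psi=\phi+a\,x_N$ on $\overline{\Omega}_1$: this keeps $\psi$ and its tangential gradient $D_\H\psi$ continuous across $\H$ while leaving the normal slope $a$ on the $\Omega_1$-side entirely free. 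Writing $\gamma:=\bar u_2(z_0)-\bar u_1(z_0)>0$ and using $\bar u_1(x)\le\bar u_1(z_0)+L_0|x-z_0|$, the inequality $(u-\psi)(x)\le (u-\psi)(z_0)$ on $\Omega_1\cap B(z_0,\rho)$ is seen to hold as soon as $a\ge L_0-\gamma/\rho$ (the near-tangential directions being controlled by the strictly negative term $-\gamma$), so that $a\to-\infty$ becomes admissible by letting $\rho\to0$. Since $(0,\delta)\in\overline{B(0,\delta)}\subset B_1(z_0)$ has positive $e_N$-component, definition \eqref{def:Ham1+} gives $H_1^+(z_0,(D_\H\phi(z_0),a))\ge -\delta a - M_l\to+\infty$ as $a\to-\infty$; hence $u(z_0)+H_1^+(z_0,D\psi_1)\to+\infty$, contradicting the flux-limited subsolution inequality of Definition~\ref{defiNEW}. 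This forces $\bar u_2\le\bar u_1$, and symmetrically $\bar u_1\le\bar u_2$ (now steepening the normal slope on the $\Omega_2$-side and using that $(0,-\delta)\in B_2(z_0)$ makes $H_2^-\to+\infty$ through \eqref{def:Ham2-}), so the traces agree and $u$ is continuous across $\H$.

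The main obstacle I anticipate is precisely the construction of this admissible test-function near $\H$: one must choose $z_0$ so that it is simultaneously a local maximum of $u-\bar u_2$ along $\H$ (to control $u-\psi$ at the points of $\H$ near $z_0$, using that $\phi\ge\bar u_2\ge u-\mathrm{const}$ there) and a point at which $\bar u_2$ admits a touching $C^1$ function from above in $\overline{\Omega}_2$. The first is obtained by the usual localisation of an upper semicontinuous function on a compact piece of $\H$; the second is available because the set of points possessing a nonempty superdifferential of the Lipschitz trace $\bar u_2$ is dense, so $z_0$ can be taken in this set while keeping $\gamma>0$. Everything else is routine: the coercivity estimate of the first paragraph, the segment argument of the second, and the symmetric treatment of the two half-spaces. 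I expect the verification that these two requirements on $z_0$ can be met together — equivalently, a clean sup-convolution or penalisation argument producing a valid element of $\T$ with a free normal slope — to be the only genuinely delicate point.
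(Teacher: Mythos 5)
Your overall strategy is genuinely different from the paper's, which does no trace analysis at all: the paper invokes the classical cone-function argument of \cite[Lemma 2.5, p.~33]{Ba}, i.e. for fixed $x$ one maximizes $y\mapsto u(y)-C|y-x|$; since cone functions are $C^1$ away from their vertex they belong to $\T$ with \emph{equal} one-sided normal slopes, so at a maximum point lying on $\H$ the flux-limited inequality of Definition~\ref{defiNEW} is evaluated at a single gradient and the coercivity of $\max(G,H_1^+,H_2^-)$ (see Remark~\ref{lemCOE}) bounds $C$ directly. Your two-step scheme --- interior Lipschitz bounds in each half-space from the coercivity of $H_1,H_2$, then exclusion of an upward jump of $u$ at $\H$ --- is sound in outline, and your blow-up mechanism is correct: if $u(z_0)-\bar u_1(z_0)=\gamma>0$, the one-sided bound $u(y)-u(z_0)\le L_0|y-z_0|-\gamma$ on $\Omega_1$ absorbs the term $a\,y_N$ on a ball of radius $\rho\sim\gamma/|a|$, so arbitrarily negative slopes $a$ are admissible locally, and $H_1^+(z_0,(p',a))\ge\delta|a|-M_l\to+\infty$ contradicts the subsolution inequality.

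However, the crux step has a genuine gap, exactly where you flag it, and your proposed patch does not close it. You must produce $z_0\in\H$ and $\phi$, $C^1$ on $\overline\Omega_2$, such that $u-\phi$ has a local maximum at $z_0$ relative to $\overline\Omega_2$ --- note that this involves $u|_\H$, which is merely usc and may exceed $\bar u_2$, so touching the trace $\bar u_2$ from above is not sufficient --- and such that moreover $u(z_0)>\bar u_1(z_0)$. Taking $z_0$ in the dense set of superdifferentiability points of $\bar u_2$ fails: a local maximum point of $u-\bar u_2$ on $\H$ is a specific point, and density gives no way to move it into that set while preserving the maximum property. The standard repair is penalization rather than touching: fix $z_1\in\H$ with $u(z_1)-\bar u_1(z_1)=:3\gamma>0$ and maximize $y\mapsto u(y)-K|y-z_1|^2-A\,y_N^-$ over $\overline\Omega_2\cap\overline{B}(z_1,r)$ with $A>L_0$. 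Any maximum point in $\Omega_2$ is beaten by its projection onto $\H$ (because $u(y)\le u((y',0))+L_0|y_N|$ there), so the maximum point $z_0$ lies on $\H$; it satisfies $K|z_0-z_1|^2\le 2\|u\|_\infty$, hence $u(z_0)-\bar u_1(z_0)\ge 3\gamma-L_0(2\|u\|_\infty/K)^{1/2}\ge 2\gamma$ for $K$ large; and the penalizing function is smooth on $\overline\Omega_2$. Appending the free slope $a\,y_N^+$ then produces a valid element of $\T$ and your blow-up argument concludes. This single statement (if $u(z)>\bar u_1(z)$ somewhere on $\H$, contradiction via $H_1^+$; symmetrically for $\bar u_2$ and $H_2^-$) also subsumes your separate ``spike'' case, since $u\ge\max(\bar u_1,\bar u_2)$ on $\H$. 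With that replacement your proof is complete, though markedly longer than the paper's.
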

\begin{rem}
  In the case of equations of evolution type, or equivalently in the
  case of finite horizon control problems, subsolutions are no longer
  Lipschitz continuous (not even in the space variable). But the
  regularization arguments of \cite{BBC1,BBC2}, using sup-convolution
  in the ``tangent'' variable together with a controlability
  assumption in the normal variable, allows one to reduce to the case
  when the subsolution is Lipschitz continuous (and even $C^1$ in the
  tangent variable if the Hamiltonians are convex).
\end{rem}
We skip the proof of Lemma~\ref{ss-lip} since it follows the classical
PDE proof (see \cite[Lemma 2.5, p. 33]{Ba}) using that $H_1, H_2$ and
$\max( G , H_1^+, H_2^-)$ are coercive function in $p$ (uniformly in
$x$); we notice that $\max(H^+_1,H^-_2)$  is a coercive
function in $p$ --- see  Remark~\ref{lemCOE} in Appendix for the case
of $\max(H^-_1,H^+_2)$, which is equivalent.

The main result of this section is the following. 
\begin{theo}[Comparison principle] \label{comp-IM} Assume [H0]-[H2]
  and [HG]. If $u,v :\R^\N \ds \R$ are respectively a usc bounded
  flux-limited subsolution and a lsc bounded flux-limited
  supersolution of (HJ-FL) then $u \leq v$ in~$\R^N$.
\end{theo}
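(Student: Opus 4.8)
The plan is to prove the comparison principle by the classical doubling-of-variables technique adapted to the flux-limited framework. First, by Lemma~\ref{ss-lip}, the subsolution $u$ is Lipschitz continuous, so I may work with a Lipschitz $u$ and a (merely lsc, bounded) supersolution $v$; this regularity is crucial because it lets me control the gradient variable near $\H$ and avoid pathological behavior of the doubling penalization in the normal direction. The strategy is to assume, for contradiction, that $M:=\sup_{\R^N}(u-v)>0$ and then double the variables by considering
\[
M_\eps := \sup_{x,y \in \R^N}\Big\{ u(x)-v(y)-\frac{|x-y|^2}{2\eps^2}\Big\},
\]
(possibly with an additional localization term, although boundedness of $u-v$ together with a small linear or quadratic perturbation should suffice to guarantee the existence of maximizers $(\xe,\ye)$). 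The standard estimates give $M_\eps \to M>0$, $|\xe-\ye|/\eps^2$ bounded, and $|\xe-\ye|\to 0$.

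The key difficulty, and the heart of the matter, is the case where the maximum points accumulate on the interface $\H$, i.e. when $(\xe)_N$ and $(\ye)_N$ both tend to $0$ (or sit exactly on $\H$). Away from $\H$ --- when both points are in the open $\Omega_1$ or both in $\Omega_2$ --- the argument is the usual one for continuous Hamiltonians satisfying the structure conditions \eqref{eq:structure}, and one concludes $M \le 0$ directly. When the points straddle the interface or lie on it, the naive doubling fails because the test-function pieces $\psi_1,\psi_2$ carry different normal derivatives, and the penalization $|x-y|^2/2\eps^2$ produces a normal-gradient contribution that does not match what the junction condition $\max(G,H_1^+,H_2^-)$ can absorb. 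This is precisely where I expect the main obstacle to lie, and where the simplification announced in the introduction enters: rather than building a vertex test function, one exploits the specific slopes of Lemma~\ref{linkHami} in the Appendix.

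Concretely, guided by that lemma, the plan is to modify the test function near $\H$ by adding a correction that adjusts the normal derivatives to the distinguished slopes for which the flux-limited inequalities are known to hold. That is, I would replace the plain penalization by one of the form $\frac{|x'-y'|^2}{2\eps^2} + \phi_\eps(x_N,y_N)$, where $\phi_\eps$ is chosen so that, at the maximum points on $\H$, the normal slopes $D\psi_1$ and $D\psi_2$ entering $H_1^+$ and $H_2^-$ are exactly the slopes $\bar p_1,\bar p_2$ selected in Lemma~\ref{linkHami}. With this choice, the subsolution inequality at $\xe$ (testing with the appropriate piece) and the supersolution inequality at $\ye$ can be subtracted: the tangential terms are handled by the Lipschitz bound on $D_\H$ through the $G$-inequality and the structure condition in [HG], while the carefully matched normal slopes make the $H_1^+$ and $H_2^-$ contributions cancel or be controlled. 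The continuity and coercivity encoded in \eqref{eq:structure} then yield $u(\xe)-v(\ye) \le \omega(\eps)$ for a modulus $\omega$ tending to $0$.

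Letting $\eps \to 0$ gives $M \le 0$, contradicting $M>0$ and establishing $u \le v$ on all of $\R^N$. The two routine ingredients --- the interior comparison away from $\H$ and the passage to the limit in the penalization parameter --- follow standard viscosity-solution bookkeeping, so I would state them briefly and concentrate the detailed work on the interface case. I expect the technical crux to be verifying that the corrected test function indeed realizes the slopes of Lemma~\ref{linkHami} at the maximum points while remaining admissible in the class $\T$ (in particular continuous across $\H$ with the correct tangential gradient), and that the resulting slope estimates are uniform in $\eps$; getting these estimates to close is where the Lipschitz regularity of $u$ and the convexity/coercivity hypotheses on the Hamiltonians are all used simultaneously.
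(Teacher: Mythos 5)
Your overall strategy coincides with the paper's: double the variables and, in the interface case, replace the naive quadratic penalization in the normal direction by a piecewise-linear correction whose slopes are the distinguished ones from Lemma~\ref{linkHami}. However, as written the proposal stops exactly where the real work begins, and the one concrete mechanism you do describe is not the right one. You assert that with matched normal slopes the $H_1^+$ and $H_2^-$ contributions ``cancel or be controlled'' and that the tangential part is absorbed through the $G$-inequality. In the actual argument the $G$-term plays essentially no role outside the subcase where both maximum points sit on $\H$; in the straddling subcases (say $(x_\gamma)_N>0$, $(y_\gamma)_N\le 0$) what saves the day is (a) the inequality $H_1\ge H_1^-$ together with the monotonicity of $H_1^-$ in the $p_N$-variable (Lemma~\ref{lem:H1m.H2p}), which lets one discard the large, sign-definite term $2(x_N-y_N)/\gamma^2$ coming from the normal penalization, and (b) the fact that $\lambda_1$ is chosen so that $H_1^-(\cdot,\tilde p'+\lambda_1 e_N)=A:=-(u(\tilde x)+v(\tilde x))/2$, so that the subsolution inequality forces $u+A\le o(1)$, contradicting $u+A>0$, which is exactly $M>0$. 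Nothing cancels between $H_1^+$ and $H_2^-$: each subcase uses only one of the sub- or supersolution inequalities, and the strict ordering $-u<A<-v$ decides which branch of the $\max$ can be discarded.

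There are two further ingredients you have not supplied and which cannot be skipped. First, before the slopes $\lambda_2<\lambda_1$ of Lemma~\ref{linkHami} at level $A$ even exist, one must check that $A>\max(\Honemin,\Htwomin)$ at the limit point; this is the content of Lemma~\ref{boncas}, proved by testing $u$ against $-L|x_N|$ for $L$ large and invoking coercivity, and it is a genuine step, not bookkeeping. Second, your localization remark (``a small linear or quadratic perturbation should suffice'') glosses over the fact that an additive perturbation destroys the subsolution property at the junction; the paper instead takes the convex combination $u_\mu=\mu u+(1-\mu)\psi$ with a strict classical subsolution $\psi$, which is where the convexity of $H_1,H_2,G$ is used. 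Finally, the single-parameter doubling $|x-y|^2/2\eps^2$ is delicate here: the paper doubles tangentially with $\eps$ first and normally with $\gamma\ll\eps$ afterwards, precisely so that the normal slope correction can be computed at a fixed tangential momentum $\tilde p'$ and so that $|x_N-y_N|/\gamma^2$ stays bounded independently of $\eps$. In short: right skeleton, but the interface analysis --- which is the whole point of the theorem --- is missing.
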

\begin{rem}
  This result is proved in the evolution setting in \cite{IM-md}.  But
  the proof presented below is much simpler, avoiding in particular
  the use of the vertex test function.
\end{rem}
\begin{proof} The first step of the proof consists in {\em localizing}
  as in \cite[Lemma 4.3]{BBC1}: for $K>0$ large enough, the function
  $\psi:=-K -(1+|x|^2)^{1/2}$ is a classical flux-limited subsolution
  of (HJ-FL). For $\mu \in ]0,1[$ close to $1$, the function
  $u_\mu:=\mu u + (1-\mu) \psi $ is also Lipschitz continuous
  (cf. Lemma~\ref{ss-lip}) and an flux-limited subsolution of (HJ-FL)
  by using the convexity of $H_1, H_2, G$.  Moreover
  $u_\mu(x) \to -\infty$ as $|x|\to +\infty$.

    The proof consists in showing that, for any $\mu\in (0,1)$,
    $u_\mu \leq v$ in $\R^N$ and then in letting $\mu$ tend to~$1$ to
    get the desired result.  Since $u_\mu (x)-v(x) \to -\infty$ as
    $|x|\to +\infty$, there exists $\bar{x} \in \R^N$ such that
$$
 M:=u_\mu (\bar{x})-v(\bar{x})=\sup_{x \in \R^N} \:  \big( u_\mu (x) - v(x) \big)  \:.
 $$
We assume by contradiction that $M >0$.

We first remark that, necessarily, $\bar{x} \in \H$. Indeed, otherwise
we can use classical comparison arguments for the $H_1$ or $H_2$
equation, together with an easy localisation argument, to get a
contradiction.

Next we consider a first doubling of variables by introducing the map 
\[
 (x',y',x_N) \mapsto u_\mu (x,x_N) - v(y',x_N)-\frac{|x'-y'|^2}{\e^2} \; .
\]
Using again the (negative) coercivity of $u_\mu$, this function reaches its
maximum $M_\eps$ at $(\tilde x',\tilde y',\tilde x_N)$ and this point is a global strict
maximum point of 
\[ 
(x',y',x_N) \mapsto u_\mu (x',x_N) -
v(y',x_N)-\frac{|x'-y'|^2}{\e^2} -|x'-\tilde x'|^2- |y'-\tilde y'|^2-|x_N-\tilde
x_N|^2\; .
\] 
Since we have $M = \lim_{\eps \to 0} M_\eps$, we can choose $\eps \in (0,\eps_0)$ so that 
$M_\eps \ge M/2 >0$.

\noindent {\bf CASE A:} 
$\tilde x_N>0$ or $\tilde x_N<0$. We introduce a new parameter $0<\gamma \ll 1$
and the function $$ (x,y) \mapsto u_\mu (x',x_N) -
v(y',y_N)-\frac{|x'-y'|^2}{\e^2}-\frac{|x_N-y_N|^2 }{\gamma^2} -|x'-\tilde
x'|^2- |y'-\tilde y'|^2-|x_N-\tilde x_N|^2\; .$$ 
Since we have $M_\eps = \lim_{\gamma \to 0} M_{\eps,\gamma}$, we can choose
$\gamma \in (0,\gamma_0)$ so that $M_{\eps,\gamma} \ge M/4>0$.

We are going to explain below that in Case A the conclusion follows
easily using the coercivity of $H_1$ or $H_2$, but with a little
modification from the standard case.

Assume for instance that $\tilde x_N>0$. Since the maximum points
$x=(x',x_N)$ and $y=(y',y_N)$ of this function respectively converge
to $(\tilde x',\tilde x_N)$ and $(\tilde y',\tilde x_N)$ when
$\gamma \to 0$, we conclude that $x,y\in\Omega_1$ for $\gamma$ small
enough.  Using the sub and supersolution conditions with Hamiltonian
$H_1$ we get
$$
\begin{aligned}
    & u_\mu(x',x_N)+H_1((x',x_N),D_x\psi_1)\leq0\\
    & v(y',y_N)+H_1((y',y_N),-D_y\psi_1)\geq0\,,
\end{aligned}
$$
where 
$$\psi_1(x,y)=\frac{|x'-y'|^2}{\e^2}+\frac{|x_N-y_N|^2 }{\gamma^2} +|x'-\tilde
x'|^2 + |y'-\tilde y'|^2+|x_N-\tilde x_N|^2\,.
$$
 The coercivity of $H_1$ (or the fact that subsolutions are Lipschitz continuous) implies by the subsolution condition 
that $|D_x\psi_1(x',x_N)|\leq C$ for some $C>0$ independent of $\eps,\gamma>0$.
In particular 
\begin{equation}\label{est-pg}
\frac{2|x_N-y_N|}{\gamma^2} \leq C\footnote{We point out here that if we were assuming normal controlability instead of complete controlability, this property would be replaced by
$$\frac{2|x_N-y_N|
}{\gamma^2} \leq C \left(\frac{2| x'- y'|}{\e^2} +1\right)\; ,$$
and the whole argument would still work.}.
\end{equation}

Subtractring the sub/supersolution conditions and using the standard
structure properties [H0] and [H1] of $H_1$ (see \eqref{eq:structure})
we get
\[
\begin{aligned}
u_\mu(x',x_N)-v(y',y_N)& \leq
m\bigg( |x'-y'|\Big(1+2\frac{|x'-y'|}{\eps^2}+2\frac{|x_N-y_N|}{\gamma^2}
+2|y'-\tilde y'|\Big)\bigg)\\
& + C\Big( 2|y'-\tilde y'|+2|x'-\tilde x'|+2|x_N-\tilde x_N|\Big)\\
& \leq
m\bigg( (1+C)|x'-y'|+2(1+C)\frac{|x'-y'|^2}{\eps^2}
+2|x'-y'| \; |y'-\tilde y'|\Big)\bigg)\\
& + C\Big( 2|y'-\tilde y'|+2|x'-\tilde x'|+2|x_N-\tilde x_N|\Big)
\end{aligned}
\]
for some (non-decreasing) modulus of continuity $m(\cdot)$ (we used
\eqref{est-pg}).  We let first $\gamma\to0$ and then $\eps\to0$. Then,
we end up with the usual contradiction: $M\leq0$. Of course, if
$\tilde x_N<0$ we use the $H_2$ sub/supersolution conditions for
$u_\mu$ and $v$.

\noindent {\bf CASE B:} $\tilde x_N = 0$. We set
$\tilde p':= \displaystyle \frac{2(\tilde x'-\tilde y')}{\e^2}$ and 
$$A:=-\Big(\frac{u_\mu (\tilde x',0) + v(\tilde x',0)}{2}\Big)\,.$$
Notice that by our choice, $-u_\mu(\tilde x',0)<A<-v(\tilde x',0)$.

To proceed, we are going to use the following lemma
  whose proof is postponed until the end of the proof of
  Theorem~\ref{comp-IM}.
  \begin{lem}\label{boncas} When $\tilde x_N=0$, we have
$$ u_\mu (\tilde x',0) + \max(\Honemin ({\tilde x},
\tilde p'),\Htwomin ({\tilde x},\tilde p')) \leq 0\; .$$
\end{lem}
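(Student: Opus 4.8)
The plan is to split the claim into the two inequalities
$u_\mu(\tilde x',0)+\Honemin(\tilde x,\tilde p')\le 0$ and
$u_\mu(\tilde x',0)+\Htwomin(\tilde x,\tilde p')\le 0$, which together are equivalent to the asserted bound on the maximum. Here I take $\Honemin(x,p'):=\min_{p_N\in\R}H_1(x,(p',p_N))$ and $\Htwomin(x,p'):=\min_{p_N\in\R}H_2(x,(p',p_N))$, the tangentially reduced (minimal) Hamiltonians attached to the specific slopes of Lemma~\ref{linkHami}. I will detail the $\Honemin$ inequality; the $\Htwomin$ one is symmetric, exchanging the roles of $(\Omega_1,H_1^+)$ and $(\Omega_2,H_2^-)$.

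First I would freeze $y'=\tilde y'$. Since $(\tilde x',\tilde y',0)$ is the global maximum point of the doubled expression, the map $(x',x_N)\mapsto u_\mu(x',x_N)-v(\tilde y',x_N)-\tfrac{|x'-\tilde y'|^2}{\e^2}-|x'-\tilde x'|^2-|x_N|^2$ reaches a local maximum at $(\tilde x',0)$ over a full $\R^N$-neighborhood, and its smooth tangential part has gradient exactly $\tilde p'=2(\tilde x'-\tilde y')/\e^2$ there. Consequently, any $\psi\in\T$ touching $u_\mu$ from above at $\tilde x$ is \emph{forced} to have $D_\H\psi(\tilde x)=\tilde p'$, while its one-sided normal slopes are constrained only by $\partial_{x_N}\psi_1(\tilde x)\ge D^+$ and $\partial_{x_N}\psi_2(\tilde x)\le D^-$, where $D^\pm$ are the one-sided normal Dini derivatives of $u_\mu$ at $\tilde x$ (finite, since $u_\mu$ is Lipschitz by Lemma~\ref{ss-lip}).

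Next, for any $q_1\ge D^+$ I would build $\psi\in\T$ with $D_\H\psi(\tilde x)=\tilde p'$ and $\partial_{x_N}\psi_1(\tilde x)=q_1$ (and any admissible slope on the $\Omega_2$ side), adding a small quadratic penalty to turn the contact into a strict local maximum. Feeding $\psi$ into the subsolution inequality of Definition~\ref{defiNEW} and keeping the $H_1^+$ term yields $u_\mu(\tilde x',0)+H_1^+(\tilde x,(\tilde p',q_1))\le 0$ for every $q_1\ge D^+$. By Lemma~\ref{linkHami}, $p_N\mapsto H_1(\tilde x,(\tilde p',p_N))=\max(H_1^+,H_1^-)$ is the maximum of a nonincreasing and a nondecreasing convex function, so its minimum $\Honemin(\tilde x,\tilde p')$ is attained at a crossing slope $\bar p_N^1$ with $H_1^+(\tilde x,(\tilde p',\bar p_N^1))=\Honemin(\tilde x,\tilde p')$, and $H_1^+(\tilde x,(\tilde p',q))\ge\Honemin(\tilde x,\tilde p')$ precisely when $q\le\bar p_N^1$. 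Hence, taking $q_1=D^+$, the displayed inequality gives $u_\mu(\tilde x',0)+\Honemin(\tilde x,\tilde p')\le 0$ \emph{as soon as} $D^+\le\bar p_N^1$.

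The main obstacle is the complementary regime $D^+>\bar p_N^1$, in which $u_\mu$ grows too steeply into $\Omega_1$: the only admissible touching slope then lies on the increasing branch $H_1=H_1^-$, where $H_1^+(\tilde x,(\tilde p',q_1))<\Honemin$, so the junction condition alone is insufficient. To close this case I would instead exploit that $u_\mu$ is a classical subsolution of $u+H_1=0$ in the \emph{open} set $\Omega_1$, together with the trivial bound $H_1(\tilde x,(\tilde p',q))\ge\Honemin(\tilde x,\tilde p')$ for all $q$: producing interior points $z\to\tilde x$ whose gradients approach $(\tilde p',D^+)$ — the tangential part being pinned to $\tilde p'$ by the contact with the smooth tangential obstacle of Step~2 — and passing the interior inequality $u_\mu(z)+H_1(z,Du_\mu(z))\le 0$ to the limit via the structure conditions~\eqref{eq:structure}, one upgrades the bound to $u_\mu(\tilde x',0)+H_1(\tilde x,(\tilde p',D^+))\le 0$, hence to $u_\mu(\tilde x',0)+\Honemin(\tilde x,\tilde p')\le 0$. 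The delicate point — and the place where the identification of the specific slopes in Lemma~\ref{linkHami} really does the work — is controlling the tangential gradient of $u_\mu$ along this interior sequence, which I expect to require an auxiliary localization or a normal-variable doubling. The symmetric argument on the $\Omega_2$ side produces the $\Htwomin$ inequality and completes the proof.
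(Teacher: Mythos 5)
Your first step is on the right track, but the argument pivots on a false claim about the structure of $H_1^+$, and this false claim is what generates the ``main obstacle'' that occupies the second half of your proof. By Lemma~\ref{lem:H1m.H2p} --- whose proof uses the controllability assumption [H2] in an essential way --- one has $H_1^+(\tilde x,(\tilde p',q))\geq \Honemin(\tilde x,\tilde p')$ for \emph{every} $q\in\R$: the function $q\mapsto H_1^+(\tilde x,(\tilde p',q))$ coincides with $H_1$ for $q\leq m_1$ and is constant equal to $\Honemin$ for $q\geq m_1$; it never drops below $\Honemin$. Your assertion that $H_1^+(\tilde x,(\tilde p',q))\geq\Honemin(\tilde x,\tilde p')$ holds ``precisely when $q\leq\bar p_N^1$'' is therefore wrong, the regime $D^+>\bar p_N^1$ in which you believe the junction inequality is insufficient is empty, and the entire interior-sequence argument (which you yourself flag as incomplete, and which would be hard to make rigorous since $u_\mu$ is merely Lipschitz, so that controlling both components of $Du_\mu$ along a sequence converging to a prescribed boundary point is delicate) addresses a case that never occurs. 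Once you know $H_1^+\geq\Honemin$ and $H_2^-\geq\Htwomin$ for \emph{arbitrary} normal slopes, a single admissible test function in $\T$ touching $u_\mu$ from above at $\tilde x$, with tangential gradient $\tilde p'$, immediately yields both inequalities from Definition~\ref{defiNEW}. A secondary (harmless) inaccuracy: a test function $\psi\in\T$ touching $u_\mu$ from above at $\tilde x$ is not \emph{forced} to have $D_\H\psi(\tilde x)=\tilde p'$; what is true, and what is needed, is that one such test function exists, inherited from the tangential doubling.

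This also makes your critical-slope machinery (Dini derivatives $D^\pm$, construction of test functions for each $q_1\geq D^+$) unnecessary here. The paper's proof is much more economical: freeze $y'=\tilde y'$, so that $\tilde x'$ is a strict maximum point of $x'\mapsto u_\mu(x',0)-v(\tilde y',0)-|x'-\tilde y'|^2/\e^2-|x'-\tilde x'|^2$ on $\H$, then add the penalization $-L|x_N|$ with $L=L(\eps)$ large. The coercivity of $H_1$ and $H_2$ rules out a maximum with $x_N\neq0$, so the maximum sits at $\tilde x=(\tilde x',0)$, and Definition~\ref{defiNEW} applied with normal slopes $+L$ (in $\overline\Omega_1$) and $-L$ (in $\overline\Omega_2$) gives $u_\mu(\tilde x)+H_1^+(\tilde x,\tilde p'+Le_N)\leq0$ and $u_\mu(\tilde x)+H_2^-(\tilde x,\tilde p'-Le_N)\leq0$; the conclusion then follows from the two lower bounds $H_1^+\geq\Honemin$ and $H_2^-\geq\Htwomin$ recalled above. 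I would encourage you to rewrite your proof along these lines, keeping only your Step 2 with an arbitrary large slope and invoking Lemma~\ref{lem:H1m.H2p} correctly.
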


Since, by Lemma~\ref{boncas},
$-u_\mu(\tilde x',0) \geq \max(\Honemin (\tilde x,\tilde p'),\Htwomin
(\tilde x,\tilde p')) $,
the inequality
\[ \max(\Honemin (z,\tilde p'),\Htwomin (z,\tilde p')) < A \]
still hold, for $\eps >0$ small enough, where
$$z=\Big(\frac{\tilde x'+\tilde y'}{2}\,,\,0\Big)\,.$$
Indeed, for such $\eps$, $A\geq -u_\mu(\tilde x',0) + M/2$, while
\[ \max(\Honemin (\tilde x,\tilde p'),\Htwomin (\tilde x,\tilde p'))\quad  \text{ is close to
} \quad \max(\Honemin (z,\tilde p'),\Htwomin (z,\tilde p')).\]

Hence, by Lemma~\ref{linkHami} in the Appendix, there exist a unique pair 
$\lambda_2 < \lambda_1$, solution of 
$$
H_{1}^- (z,\tilde p'+\lambda_1 e_N)=
A\quad,\quad  H_{2}^+ (z,\tilde p'+\lambda_2 e_N)=
A \; .$$

 In order to build the test-function, we set
 $h(t):=\lambda_1t_+-\lambda_2t_-$ (with $t_+= \max(t,0)$ and
 $t_-=\max(-t,0)$) and
\begin{equation} \label{defchila}
\chi(x_N,y_N): = h(x_N)-h(y_N)=
\apg
\begin{array}{ll}
    \lambda_1 ( x_N - y_N) & \mbox{ if }  x_N \geq 0 \: , \: y_N \geq 0\,, \\
    \lambda_1  x_N-\lambda_2
    y_N & \mbox{ if }  x_N \geq 0 \: , \: y_N < 0\,, \\
    \lambda_2 x_N - \lambda_1
    y_N &  \mbox{ if }  x_N < 0 \: , \: y_N \geq 0\,, \\
    \lambda_2( x_N - y_N ) & \mbox{ if }  x_N < 0 \: , \: y_N < 0 \,.
\end{array} \ch
\end{equation}
Now, for $0<\gamma\ll \e$ we define a test function as follows
$$
\psi_{\e,\gamma}(x,y):= \frac{|x'-y'|^2}{\e^2} + \chi(x_N,y_N)+
\frac{|x_N-y_N|^2 }{\gamma^2}+|x'-\tilde x'|^2+|y'-\tilde
y'|^2+|x_N-\tilde x_N|^2\; .$$
In view of the definition of $h$, we see that for any $x \in \R^N$ the
function $\psi_{\e,\gamma}(x,\cdot) \in \T$ and for any $y \in \R^N$
the function $\psi_{\e,\gamma}(\cdot,y) \in \T$.

Dropping the $\eps$-reference but keeping the $\gamma$ one, 
let us define $\xe=(x'_{\gamma};(x_{\gamma})_N)$ and 
$\ye=(y'_{\gamma};(y_{\gamma})_N)$, the maximum points of
$u_\mu(x)-v(y)-\psi_{\e,\gamma}(x,y)$. More
precisely  
\[
u_\mu(\xe)-v(\ye)-\psi_{\e,\gamma}(\xe,\ye)= \max_{(x,y) \in
    \R^N \times \R^N} \apt   u_\mu(x)-v(y)-\psi_{\e,\gamma}(x,y) \cht  \: .
\]
Because of the localisation terms, we have, as $\gamma \to 0$,
$ x_\gamma \to (\tilde x',0)$ and $y_\gamma \to (\tilde y',0)$. From
now on, we are going to drop the localisation terms to simplify the
expressions, keeping just their effects which are all of $o(1)$ types.

We have to consider different cases depending on the position of $\xe$
and $\ye$ in $\R^N$. Of course, using again the coercivity of $H_1$ or
$H_2$, we have no difficulty for the cases $\xne,\yne >0$ or
$\xne,\yne < 0$; only the cases where $\xe$, $\ye$ are in different
domains or on $\H$ cause problem. For the sake of simplicity of
notation, write $\psi$ for $\psi_{\e,\gamma}$ and
$(\lambda_1,\lambda_2)$ where actually those
parameters depend on $\eps,\gamma$.

  For the sake of
clarity we start by summarizing the arguments we use to get a
contradiction for the various subcases.
\begin{itemize}
\item
Subcases B-(a) and B-(b): we use the subsolution condition for
$u_\mu$ and $u_\mu+A>0$ .
\item Subcases B-(c) and B-(d): we use the supersolution for $v$ and
$v+A<0$ .
\item
Subcase B-(e): we use the FL-definition on the interface.
\end{itemize}

Now we detail the proofs.

\noindent\textbf{Subcase B-(a):} $\xne >0$, $\yne \leq 0$.

Let us assume first that $\yne <0$. Since $\xe \in \Omega_1$ therefore
we look at $\xe$ as a local  maximum point in $\Omega_1$ of the
function
\[
 x \mapsto u_\mu(x)-v(\ye)- \frac{|x'-\ype|^2}{\e^2}-(\lambda_1 x_N-\lambda_2 \yne) 
-  \frac{|x_N-\yne|^2 }{\gamma^2} + \text{(localization terms)}.
\] 
Since $u_\mu$ is a subsolution of $u_\mu(x)+H_1(x,Du_\mu) = 0$   in  $ \Omega_1$, this implies that 
\begin{equation}\label{ineq:subsol}
u_\mu(\xe)+  H_1(\xe,D_x\psi(\xe,\ye)) \leq 0 
\end{equation}
where 
\[
D_x\psi(\xe,\ye)=p'_{\gamma} + \lambda_1  e_N + 2  \frac{\xne-\yne}{\gamma^2}e_N + o(1)
\: ,
\]
with $\displaystyle p'_{\gamma} = 2  \frac{(\xe-\ye)}{\e^2}$.
We point out that $p'_{\gamma} \to \tilde p'$ as $\gamma \to 0$ and
therefore $p'_\gamma= \tilde p'+o_\gamma(1)$.

Notice first that since $u_\mu$ is Lipschitz continuous, $D_x\psi$ is
bounded and by [H0]-[H1] (analogously to \eqref{eq:structure}) there
exists a modulus of continuity $\omega(\cdot)$ (independent of
$\gamma$ and $\e)$ such that
$$
|H_{1}^-(\xe,D_x\psi(\xe,\ye))- H_{1}^- (z,
D_x\psi(\xe,\ye))|\leq \omega(|\xe-z|)\,.
$$
Since $\xe \to (\tilde x',0)$ and since $|z-(\tilde x',0)|=o_\e(1)$, we have $|\xe-z|=o_\gamma(1)+o_\e(1)$.
Then, using also the monotonicity of $H^-_1$ in the $p_N$-variable
(see Lemma \ref{lem:H1m.H2p} in the Appendix) we have 
$$
H_{1}^-(\xe,D_x\psi(\xe,\ye)) \geq  H_{1}^- (z, \tilde p' +
\lambda_1  e_N) +o_\gamma(1)+o_\e(1) \; .
$$
Then we use that $H_1\geq H^-_{1}$ and since $ u_\mu(\xe)=u_\mu(\tilde
x',0)+o_\gamma(1)$, we get, using the definition of~$\lambda_1$
$$\begin{aligned}
0 \geq u_\mu(\xe)+  H_1(\xe,D_x\psi(\xe,\ye)) \geq &\  
u_\mu(\tilde
x',0)+  H_{1}^- (z, \tilde p' +
\lambda_1  e_N) +o_\gamma(1)+o_\e(1)\\ 
\geq &u_\mu(\tilde
x',0)+  A +o_\gamma(1)+o_\e(1)\,.
 \end{aligned}
$$
But $u_\mu(\tilde x',0)+A>0$, therefore if $\gamma\ll \eps $ are small enough, we get a contradiction with
\eqref{ineq:subsol} since $M>0$.
Finally, the same argument works for $\yne=0$, 
changing the $y_N$-term in $\chi$.

\noindent\textbf{Subcase B-(b):} $\xne <0$, $\yne \geq 0$.

Since the argument is symmetrical to the first case, we omit the proof: we just
use the subsolution condition with $H^+_{2}$ and the definition of $\lambda_2$
instead of $H^-_{1}$ and the definition of $\lambda_1$.

\noindent\textbf{Subcase B-(c):} $\xne =0$, $\yne > 0$.

On the one hand, since $\xe \in \H$ the
FL-definition yields 
\[\begin{aligned}
  \max \Big(  u_\mu(\xe)+ G(\xe, D_\H\psi(\xe,\ye))\ ;\  &
  u_\mu(\xe)+ H_1^+(\xe, D_x\psi_1(\xe,\ye))\  ;\ \\
  & u_\mu(\xe)+ H_2^-(\xe, D_x\psi_2(\xe,\ye)) \Big) \leq 0
  \end{aligned}
\]
which implies in particular 
\begin{equation}\label{ineq:case3.sub}
u_\mu(\xe)+ H_1^+(\xe, D_x\psi_1(\xe,\ye))  \leq 0
\end{equation}
where 
\[D_x \psi_1 (\xe,\ye) = p'_\gamma + \lambda_1 e_N - \frac{2}{\gamma^2} (y_\gamma)_N  e_N + o(1).\]
On the other hand, since $v$ is a supersolution of $v+H_1(y,Dv)=0$ in
$\Omega_1$ this implies \be \label{dis1} v(\ye)+ H_1(\ye, - D_y
\psi_1(\xe,\ye)) 
 \geq 0 \ee where
\[
D_y \psi_1(\xe,\ye)= -p'_\gamma -\lambda_1  e_N +  \frac{ 2 }{\gamma^2}
\:   \yne e_N + o(1)\,.
\]
Our goal is to show that the above viscosity inequality holds with
$H_{1}^+$ instead of $H_1$. Indeed, combined with
\eqref{ineq:case3.sub}, this implies $u_\mu (\xe) \le v(\ye) + o(1)$;
passing to the limit in $\gamma$ and $\eps$ respectively, we reach
the contradiction $M = u_\mu (\bar x) - v (\bar x)\le 0$.

In order to do so, since
$H_1=\max(H_{1}^-,H_{1}^+)$ it is enough to show that
\[ 
v(\ye)+ H_{1}^- (\ye, - D_y \psi_1(\xe,\ye))<0\,.
\]
We use similar arguments as in case 1: first, the gap between
$H^-_{1}$ taken at $\xe$ and $\ye$ is controlled by a modulus of
continuity $\omega$.  Then, since $2\yne/\gamma^2>0$ we can use the
monotonicity property of $H^-_{1}$ which gives \be \label{dis2}
v(\ye)+ H_{1}^- (\ye, - D_y \psi_1(\xe,\ye)) \leq v(\ye) + H_{1}^- (z,
p'_\gamma + \lambda_1 e_N ) +o_\gamma(1)\,.  \ee Recalling that
$v(\ye) \to v(\tilde x',0)$, even if $v$ is just lower
semi-continuous, and using the definition of $\lambda_1$ we see that
$$
v(\ye)+ H_{1}^- (\ye, - D_y \psi_1(\xe,\ye)) \leq v(\tilde y',0) +A +o_\gamma(1)+o_\e(1)$$
But $v(\tilde y',0) +A <0$ and if $\gamma\ll \e$ are small enough we get the desired strict inequality.
Therefore, for $\gamma\ll \e$  small enough, we have necessarily 
\begin{equation}\label{ineq:case3.sup}
v(\ye)+H_1^+ (\ye, - D_y \psi_1(\xe,\ye)) \geq 0 \:.
\end{equation}
The conclusion follows by combining \eqref{ineq:case3.sup} and
\eqref{ineq:case3.sub}, and letting first $\gamma$ tend to $0$, then $\e$.

\noindent\textbf{Subcase B-(d):} $\xne =0$, $\yne < 0$. 

The proof is symmetrical to case 3 above: the FL-condition gives a
subsolution condition for $H^-_{2}$ and the supersolution condition is
obtained by using $H^+_{2}$ (instead of $H^-_{1}$ as in the previous
case).

\noindent\textbf{Subcase B-(e):} $\xne =0$, $\yne =0$.

In this case we have both $\xe$ and $\ye$ in $\H$ therefore we have to
use the fact that $u_\mu$ and $v$ are respectively a flux-limited
subsolution and a flux-limited supersolution. Applying carefully
Definition~\ref{defiNEW}, we have
$$
  \max \Big( u_\mu (\xe)+ G(\xe, p'_\gamma)\,;\, u_\mu(\xe)+ H_1^+(\xe, p'_\gamma +
  \lambda_1e_N)\,;\,  u_\mu(\xe)+ H_2^-(\xe, p'_\gamma + \lambda_2e_N) \Big) \leq 0
  \: .
$$
$$
  \max \Big( v(\ye)+ G(\ye, p'_\gamma)\,;\, v(\ye)+ H_1^+(\ye, p'_\gamma +
  \lambda_1e_N)\,;\, v(\ye)+ H_2^-(\ye, p'_\gamma + \lambda_2e_N) \Big) \geq 0  \: .
$$
And the conclusion follows again by letting successively $\gamma$ and $\eps$ tend to $0$.
\end{proof}

\begin{proof}[Proof of Lemma~\ref{boncas}.]
We recall that $(\tilde x',\tilde y',0)$ is a global strict maximum point of 
\[ 
(x',y',x_N) \mapsto u_\mu (x',x_N) -
v(y',x_N)-\frac{|x'-y'|^2}{\e^2} -|x'-\tilde x'|^2- |y'-\tilde y'|^2-|x_N|^2\; .
\] 
In particular, $\tilde x'$ is a global strict maximum point of 
$$x' \mapsto u_\mu (x',0) -
v(\tilde y',0)-\frac{|x'-\tilde y'|^2}{\e^2} -|x'-\tilde x'|^2\; .$$
And we introduce the function
$$(x',x_N) \mapsto u_\mu (x',x_N) -
v(\tilde y',0)-\frac{|x'-\tilde y'|^2}{\e^2} -|x'-\tilde x'|^2-L|x_N|\; ,$$
where $L>0$ is a large constant. 

Choosing $L=L(\eps)$ large enough, the maximum of this new function is
necessarily reached for $x_N=0$: indeed, if $x_N>0$ or $x_N<0$, the
viscosity subsolution inequalities cannot hold because of the
coercivity of $H_1$ and $H_2$.

Therefore this maximum is achieved at $\tilde x=(\tilde x',0)$ and Definition~\ref{defiNEW}, we have
$$
  \max \Big( u_\mu (\tilde x)+ G(x, \tilde p')\,;\, u_\mu(\tilde x)+ H_1^+(\tilde x, \tilde p' +
  L.e_N)\,;\,  u_\mu(\tilde x)+ H_2^-(\tilde x, \tilde p' -L.e_N) \Big) \leq 0
  \: .
$$
In particular, according to the definition of $\Honemin (\tilde x,\tilde p'),\Htwomin (x,\tilde p')$
$$ \max \Big(u_\mu(\tilde x)+ \Honemin (x,\tilde p')\,;\,  u_\mu(\tilde x)+ \Htwomin (x,\tilde p') \Big) \leq 0\; ,$$
which gives the desired inequality.
\end{proof}

\begin{rem}[Extension to second order equations]
The (simplified) proof of Theorem~\ref{comp-IM} can be generalized to treat the case of second-order equations, provided that the junction condition remains first-order; this means that (\ref{HJ1})-(\ref{HJ2}) can be replaced by
$$u+H_i(x,Du)-{\rm Tr}(a_i(x)D^2u) = 0 \quad\hbox{in  }\Omega_i\; ,$$
where the $a_i$'s satisfy : for $i=1,2$, there exist $N\times p$, Lipschitz continuous matrices $\sigma_i$ such that $a_i=\sigma_i.\sigma_i^T$, $\sigma_i^T$ being the transpose matrix of $\sigma_i$, with $\sigma_i((x',0))=0$ for all $x'\in \R^{N-1}$. 

Then, Case~A ($\tilde x_N\neq 0$) follows from classical "second-order" proof,
doubling doubling variables with only one parameter $\eps$, 
both for $x'$ and $\tilde x_N$. 
For Case~B, let us only notice that the second-order terms generated by our
penalizations are either small as $x_\gamma$ and/or $y_\gamma$ 
approaches the interface (because $\sigma_i$ for $i=1,2$ vanishes there and is
Lipschitz continuous), or they simply do not exist if we are on the
interface since the equation degenerates to a first-order one. Hence the proofs
apply as such.
\end{rem}

\subsection{Link with control problems}
\label{sect:control.im}

In order to describe the control problem, we first have to define the
admissible trajectories. We say that $X(\cdot)$ is an admissible
trajectory if
\begin{enumerate}[\rm (i)]
\item there exists a global control $a=(\alpha_1,\alpha_2,\alpha_0)$
  with $\alpha_i\in \mathcal{A}_i:=L^\infty(0,\infty;A_i)$ for
  $i=0,1,2$,
\item there exists a partition $\I=(\I_1, \I_2, \I_0)$ of
  $(0,+\infty)$, where $\I_1, \I_2, \I_0$ are measurable sets, such
  that $X(t)\in\overline{{\Omega}_i}$ for any $t\in \I_i$ if $i=1,2$ and
  $X(t)\in\H$ if $t\in \I_0$,
\item $X$ is a Lipschitz continuous function such that, for almost every $t >0$ 
\be
\dot{X}(t)=b_1(X(t),\alpha_1(t))\ind{\I_1}(t)+b_2(X(t),\alpha_2(t))
\ind{\I_2}(t)+b_0(X(t),\alpha_0(t))\ind{\I_0}(t)\,.  
\ee
\end{enumerate}
The set of all admissible trajectories $(X,\I,a)$ issued from a point $X(0)=x\in\R^N$ is
denoted by $\mT_x$. Notice that under the controllability assumption of $b_1$
and $b_2$, for any point $x\in\R^N$ the constant trajectory $X(t)=x$ 
is admissible so that $\mT_x$ is never void. 

The value function (with actualization factor $\lambda=1$) 
is then defined as 
\begin{multline*}
    \Uim_G(x):=\inf_{(X,\I,a)\in\mT_x}  \int_0^{+\infty} 
    \bigg\{  l_1(X(t),\alpha_1(t))\ind{\I_1}(t)+
        l_2(X(t),\alpha_2(t))\ind{\I_2}(t)\\
         + l_0(X(t),\alpha_0(t))\ind{\I_0}(t)\bigg\}
        e^{-t}\d t
\end{multline*}
where $(l_0,l_1,l_2)$ are running costs defined in $\H,\Omega_1,\Omega_2$
respectively.

By standard arguments based on the Dynamic Programming Principle and
the above comparison result, we have the
\begin{theo} The value function $\Uim_G$ is the unique FL-solution of
(HJ-FL). 
\end{theo}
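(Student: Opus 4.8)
The plan is to prove that $\Uim_G$ is at once a flux-limited subsolution and a flux-limited supersolution of (HJ-FL); uniqueness is then an immediate consequence of the comparison principle, Theorem~\ref{comp-IM}. Boundedness is clear, since the costs are bounded by $M_l$ and $\lambda=1$, whence $|\Uim_G|\le M_l$. The engine of the argument is the Dynamic Programming Principle (DPP): for every $x\in\R^N$ and every small $t>0$, the value $\Uim_G(x)$ equals the infimum over $(X,\I,a)\in\mT_x$ of the running cost accumulated on $[0,t]$ plus $e^{-t}\Uim_G(X(t))$. This follows from the standard concatenation and restriction argument for admissible trajectories, which is legitimate here because the class $\mT_x$ is stable under these operations (and is nonempty thanks to [H2]).

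In the open regions $\Omega_1$ and $\Omega_2$ the DPP yields, by the usual viscosity arguments, that $\Uim_G$ is a classical sub- and supersolution of $u+H_i(x,Du)=0$. The heart of the matter is the junction $\H$. For the \emph{subsolution} inequality of Definition~\ref{defiNEW} one must establish \emph{all three} inequalities $u+G\le0$, $u+H_1^+\le0$ and $u+H_2^-\le0$ at a maximum point $x\in\H$. Each is obtained by freezing a suitable control and using the DPP over a short horizon: constant controls $\alpha_0\in A_0$ keeping the trajectory on $\H$ give $u(x)+G(x,D_\H\psi)\le0$; controls $\alpha_1\in A_1$ with $b_1(x,\alpha_1)\cdot e_N>0$, which push the trajectory into $\Omega_1$, produce $u(x)+H_1^+(x,D\psi_1)\le0$; and symmetrically, controls $\alpha_2\in A_2$ with $b_2(x,\alpha_2)\cdot e_N<0$ yield $u(x)+H_2^-(x,D\psi_2)\le0$. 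The sign restriction on $b_i\cdot e_N$ is precisely what guarantees that the trajectory remains (to first order) in the correct region, so that the branch $\psi_1$ (resp.\ $\psi_2$) of the test function is the relevant one to differentiate along the path.

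For the \emph{supersolution} inequality one must instead show $\max(u+G,\,u+H_1^+,\,u+H_2^-)\ge0$, i.e.\ that at least one term is nonnegative. I would argue by contradiction with a near-optimal trajectory: if all three quantities were strictly negative at $x\in\H$, then along \emph{any} admissible trajectory issued from $x$ --- whether it stays on $\H$, enters $\Omega_1$, or enters $\Omega_2$ --- the DPP would be violated on a short time interval, contradicting optimality. The exhaustive decomposition of the dynamics into these three regimes matches exactly the three Hamiltonians in the definition. I expect the main obstacle to be the bookkeeping on $\H$: controlling the $o(1)$ errors produced when the trajectory leaves and possibly re-enters the interface over a small interval, and checking that the sign conditions $b_i\cdot e_N\gtrless 0$ are compatible with the closedness and convexity structure [H2], so that $H_1^+$, $H_2^-$ and $G$ genuinely appear as the correct limits. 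Once the flux-limited sub/supersolution property is established, Lemma~\ref{ss-lip} gives Lipschitz continuity and Theorem~\ref{comp-IM} delivers uniqueness, completing the proof.
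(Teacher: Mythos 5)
Your proposal takes exactly the route the paper does: the paper's entire proof of this theorem is the single sentence that it follows ``by standard arguments based on the Dynamic Programming Principle and the above comparison result,'' i.e., the DPP yields the flux-limited sub/supersolution properties of $\Uim_G$ and Theorem~\ref{comp-IM} then gives uniqueness, which is precisely your plan. Your sketch is a faithful expansion of that one-line argument and correctly flags where the real work lies (the junction inequalities, where the matching of the regimes $\I_0,\I_1,\I_2$ with $G$, $H_1^+$, $H_2^-$ is not literally exact and one must invoke the controllability in [H2] and a net-displacement bookkeeping for trajectory segments in $\Omega_i$ with the ``wrong'' sign of $b_i\cdot e_N$).
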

\begin{rem}
   In \cite{IM}, deriving the Hamilton-Jacobi equation in
  the finite horizon case is more difficult. Indeed, taking into
  account trajectories which oscillate around the junction point (Zeno
  phenomenon) induce some technical difficulties. 
\end{rem}
\begin{rem}
It is worth pointing out that, in this approach, the partition in
$\I_1, \I_2, \I_0$ implies that there is no mixing on $\H$ between the
dynamics and costs in $\Omega_1$ and $\Omega_2$, contrarily to the BBC
approach (see below). A priori, on $\H$, we have an independent
control problem and no interaction between $(b_1,l_1)$ and
$(b_2,l_2)$.
\end{rem}
\begin{rem}
Partially connected to the previous remark, here we cannot solve the
controlled differential equation by the differential inclusion tools because once
given the sets $\I=(\I_1,\I_2,\I_0)$, the associated set-valued map defining
the dynamics and costs need not be upper semicontinuous. Indeed, in general 
$b_0$ need not be related to the $(b_i)_{i=1..2}$, except for special choices
of $G$ --- see Section \ref{sect:comp.IM.BBC}. 
\end{rem}

\section{The regional control problem}
\label{sect:BBC}

We describe now the optimal control problem related to the
Hamilton-Jacobi equation studied in \cite{BBC1,BBC2}. It is referred
to as the \emph{regional control problem}. The basic framework remains the
same as for the FL framework, assumptions [H0]-[H1]-[H2] being
exactlty the same. We keep the same notation when no difference
arises between the two frameworks.

The difference concerns the controlled dynamics and trajectories 
which may stay for a while on the common boundary $\H$: instead of [HG],  
here the dynamics on $\H$ are naturally induced by convex combinations of the
dynamics in $\overline{\Omega}_1$ and $\overline{\Omega}_2$.  More precisely, if $z\in\H$
we set
\begin{equation} \label{fH}
    b_\H\big(z,a)=b_\H\big(z,(\alpha_1, \alpha_2, \mu)\big):=\mu
    b_1 (z,\alpha_1) + (1-\mu)b_2(z,\alpha_2)\,,
\end{equation}
where $\mu \in [0,1]$, $\alpha_1 \in A_1$, $\alpha_2 \in A_2$.  For any
$z\in\H$ and we denote here by
\begin{equation*} 
    A_\H(z):=\Big\{\a=(\alpha_1, \alpha_2, \mu):
	b_\H\big(z,(\alpha_1, \alpha_2, \mu)\big)\cdot
	  e_N(z) = 0\Big\}\,,
\end{equation*}
and the associated cost on $\H$ is
\begin{equation}  \label{lH}
    l_\H(z,\a)=l_\H\big(z,(\alpha_1, \alpha_2, \mu)\big):=\mu
    l_1 (z,\alpha_1) + (1-\mu)l_2(z,\alpha_2)\,.
\end{equation}

Here, the trajectories can be defined by using the approach
through differential inclusions: a trajectory
$X(\cdot)$ issued from $x\in\R^N$ is a Lipschitz continuous functions solution
of the following differential inclusion
\begin{equation} \label{def:traj} 
  \dot X (t) \in \mB(X (t)) \quad \hbox{for a.e.  } t \in
  [0,\infty)  \: ; \quad X(0)=x 
\end{equation} 
where 
\begin{equation}
  \mB(z):= \begin{cases} \mathbf{B}_i(z)   &   \text{ if }
      z\in\Omega_i\,,    \\ \cob \big( \mathbf{B}_1(z) \cup
      \mathbf{B}_2(z) \big)  &  \text{ if } z\in\H\,, 
   \end{cases} 
\end{equation}
the notation $\cob(E)$ referring to the convex closure of the set
$E\subset\R^N$.  
As we see,  controls $a(\cdot)$ can take two forms: either $a(s)$
belongs to one of the control sets $A_i$; or it can be expressed as a
triple $(\alpha_1,\alpha_2,\mu)\in A_1\times A_2\times[0,1]$.  Hence, in
order to define globally a control, we introduce the compact set 
    $A:= A_1\times A_2\times [0,1]$
and define a control as
being a function of  $\Ac:= L^\infty (\R^+; A)$.  From the differential
inclusion we also recover the sets 
\begin{equation*}
    \Ii:= \big\{ t\in \R^+:X (t) \in \Omega_i  \big\} \,,\quad
    \I_\H:= \big\{t\in \R^+:X (t) \in \H   \big\} \,,
\end{equation*}
and the trajectories are then precisely described in the following theorem 
from \cite{BBC1}. 
\begin{theo}[{\cite[Theorem 2.1]{BBC1}}]\label{def:dyn}
 Assume [H0], [H1] and [H2]. Then
 \begin{enumerate}[\rm (i)]
     \item For each $x\in \R^N$,  there exists a
	 Lipschitz function $X: \R^+ \to \R^N$ which is a
	 solution of the differential inclusion  \eqref{def:traj}.
     \item For each solution  $X(\cdot)$ of
	 \eqref{def:traj},  there exists a control $\a(\cdot)\in \Ac$
	 such that for a.e. $ t \in \R^+$
         \begin{equation}
	     \begin{aligned}\label{fond:traj} 
		 \dot X (t) &= \sum_{i=1,2}b_i\big(X
		 (t),\alpha_i(t)\big)\mathds{1}_{\Ii}(t)+
		 b_\H\big(X,a(t)\big)\mathds{1}_{\I_\H}(t)
             \end{aligned}
        \end{equation} 
	where $a(t)=\big(\alpha_1(t),\alpha_2(t),\mu(t)\big)$ if
	$X(t)\in\H$.  
    \item We have
    	\begin{equation*}
		b_\H\big(X(t),a(t)\big)\cdot  e_N \big(X(t)) = 0 \quad \hbox{for a.e.
		}t\in\I_\H\;.  
	\end{equation*}
	In other words, $a(t)\in A_\H(X(t))$ for a.e.
	$t\in\I_\H$.
  \end{enumerate} 
\end{theo}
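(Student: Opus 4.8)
The plan is to treat the three assertions in order: (i) by a standard existence theorem for differential inclusions, and (ii)--(iii) by measurable selection together with an elementary differentiation fact. For (i), I would first check that the set-valued map $z \mapsto \mB(z)$ satisfies the hypotheses ensuring existence of solutions of \eqref{def:traj}: it has nonempty, compact, convex values and is upper semicontinuous, and it is globally bounded by $M_b$ thanks to [H0]. Compactness and convexity of $\mathbf{B}_i(z)=\{b_i(z,\alpha_i):\alpha_i\in A_i\}$ follow from the compactness of $A_i$, the continuity of $b_i$ and the convexity assumed in [H2]; on $\H$ the value $\cob(\mathbf{B}_1(z)\cup\mathbf{B}_2(z))$ is the closed convex hull of a union of two compact convex sets, hence again compact and convex.

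The delicate point of (i) is the upper semicontinuity of $\mB$ at the interface. First I would record the elementary fact that for two compact convex sets $C_1,C_2$ one has $\cob(C_1\cup C_2)=\{\mu a+(1-\mu)b : a\in C_1,\ b\in C_2,\ \mu\in[0,1]\}$, so that the value on $\H$ is exactly the set of convex combinations of one velocity from each side (this identity will also be used in step (ii)). Upper semicontinuity then follows by a case analysis on how $z$ approaches a point $z_0\in\H$: if $z\to z_0$ within $\Omega_1$ then $\mathbf{B}_1(z)\to\mathbf{B}_1(z_0)\subset\mB(z_0)$ by continuity of $b_1$ and compactness of $A_1$ (and symmetrically from $\Omega_2$), while if $z\to z_0$ along $\H$ the sets $\mathbf{B}_i(z)$ converge to $\mathbf{B}_i(z_0)$ and hence so do their closed convex hulls. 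Once $\mB$ is upper semicontinuous with compact convex values and globally bounded, the classical existence theorem for differential inclusions with upper semicontinuous right-hand side produces a solution on any bounded interval; the uniform bound $M_b$ prevents blow-up and yields a global $M_b$-Lipschitz solution $X$ on $[0,+\infty)$, which is (i).

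For (ii), given a solution $X$ I would recover the controls by measurable selection. On each measurable set $\Ii$ (where $X(t)\in\Omega_i$) we have $\dot X(t)\in\mathbf{B}_i(X(t))$ for a.e. $t$, so Filippov's measurable selection lemma, applied to the Carath\'eodory map $(t,\alpha_i)\mapsto b_i(X(t),\alpha_i)$, yields a measurable $\alpha_i(\cdot)$ with $\dot X(t)=b_i(X(t),\alpha_i(t))$. On $\I_\H$ the decomposition of $\cob(\mathbf{B}_1\cup\mathbf{B}_2)$ recalled above shows that $\dot X(t)$ is of the form $\mu b_1(X(t),\alpha_1)+(1-\mu)b_2(X(t),\alpha_2)$, and a second application of the selection theorem to $(t,\alpha_1,\alpha_2,\mu)\mapsto \mu b_1(X(t),\alpha_1)+(1-\mu)b_2(X(t),\alpha_2)$ produces measurable $(\alpha_1(\cdot),\alpha_2(\cdot),\mu(\cdot))$. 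Completing the triples arbitrarily on the sets $\Ii$ (the unused components play no role there) gives a control $a\in\Ac=L^\infty(\R^+;A)$ realizing \eqref{fond:traj}.

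Finally, (iii) is a consequence of the classical fact that a Lipschitz function has zero derivative almost everywhere on any of its level sets. Since $X(t)\in\H=\{x_N=0\}$ for $t\in\I_\H$, the $N$-th component $X_N$ vanishes on $\I_\H$, whence $\dot X(t)\cdot e_N=0$ for a.e. $t\in\I_\H$. As $\dot X(t)=b_\H(X(t),a(t))$ there by (ii), we obtain $b_\H(X(t),a(t))\cdot e_N=0$ a.e. on $\I_\H$, which is precisely $a(t)\in A_\H(X(t))$. I expect the main obstacle to be the upper semicontinuity of $\mB$ at the interface together with the two-set convex-hull decomposition, since the remaining arguments reduce to invoking standard selection and differentiation theorems.
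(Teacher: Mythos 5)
Your argument is correct, and it follows essentially the route of the original proof: existence via the classical theorem for upper semicontinuous, compact convex valued, bounded differential inclusions; recovery of the controls via the two-set convex-hull decomposition $\cob(\mathbf{B}_1\cup\mathbf{B}_2)=\{\mu b_1+(1-\mu)b_2\}$ and Filippov's measurable selection lemma; and the tangency condition on $\H$ from the fact that the Lipschitz function $X_N$ has vanishing derivative a.e.\ on the level set $\I_\H$. Note that the present paper does not reprove this statement but quotes it from \cite[Theorem 2.1]{BBC1}, where the proof is along exactly these lines, so there is nothing to object to here.
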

As in Section \ref{sect:control.im} we introduce the set $\Tc_{x}$ of admissible
controlled trajectories starting from $x$, as the set of $(X,a)$ such that $X$
is Lipschitz, $X(0)=x$ and $(X,a)$ and satisfies \eqref{fond:traj}. This set is
not void because we can solve it as above, by differential inclusion. We now
introduce two kind of strategies on $\H$.

Given $z\in\H$, we call \textit{singular} a dynamic
$b_\H(z,\a)$ with $a=(\alpha_1, \alpha_2, \mu)\in A_\H(z)$ when 
\begin{equation*}
    b_1(z,\alpha_1)\cdot e_N(z) > 0\,, \quad
    b_2(z,\alpha_2)\cdot  e_N (z)< 0\,.  
\end{equation*}
Conversely, the \textit{regular} dynamics are
those for which  $ b_1(z,\alpha_1)\cdot e_N(z) \leq0$ and $  b_2(z,\alpha_2)\cdot  e_N (z)\geq  0 $.
Then, the regular trajectories are defined as 
 \begin{equation*}
     \Tc_{x}^{\rm reg}:= \Big\{ (X,\a) \in   \Tc_{x}:
	  \text{ for a.e. } t \in\I_\H,  \:
	  b_\H\big(X(t),a(t)\big) \mbox{ is regular}
	  \Big\}\,.  
 \end{equation*}
The cost associated to $(X,\a)  \in  \Tc_{x}$
is similar to the one in Section \ref{sect:control.im}, where $l_\H$ is given
by~\eqref{lH}: 
\begin{equation*} 
    \ell(X,a):= \sum_{i=1,2}
	l_i\big(X(t),\alpha_i(t)\big) \mathds{1}_{\Ii}(t) +
        l_\H\big(X(t),\a(t)\big)\mathds{1}_{\I_\H}(t)\,, 
\end{equation*}
however, here we define to value functions according to whether we minimize
the cost on $\Tc$ or $\Tc^{\rm reg}$:
for each $x\in\R^N$ we set
\begin{equation} 
    \Um (x):= \inf_{(X,\a) \in  \Tc_{x} } \int_0^{+\infty} \ell(X, \a)e^{-t}\d
    t\,,\quad
    \Up (x):=\inf_{(X,\a) \in  \Tc_{x}^{\rm reg} } \int_0^{+\infty}
    \ell(X, \a)e^{-t}\d t\,.
\end{equation}
 
Under assumptions [H0]-[H1]-[H2], $\Um$ and $\Up$ fulfill a classical Dynamic
Programming Principle, are bounded and Lipschitz continuous from $\R^\N$
into $\R$ (see \cite[Theorem 2.2, Theorem 2.3]{BBC1}).   
>From the pde viewpoint, in each set $\Omega_i$ both $\Um$ and $\Up$ satisfy the
Hamilton-Jacobi equation $H_i(x,u,Du)=0$ where the $H_i$ are defined by
\eqref{def:Ham1} and \eqref{def:Ham2}.
Now, in order to describe what is happening on the hypersurface $\H$, we
introduce two "tangential Hamiltonians", namely $\HT,\HTreg$.

Recall that if $\phi\in C^1(\H)$, and
$x\in\H$, we denote by $D_\H \phi(x)$ the gradient of $\phi$ at
$x$, which belongs to the tangent space of $\H$ at $x$, identified with
$\R^{N-1}$. 
The Hamiltonian $\HT(x,p_\H)$ is defined for
$(x,p_\H)\in \H\times \R^{N-1}$ as follows:
\begin{equation}\label{def:HamHT} 
    \HT(x,p):=\sup_{A_\H(x)} \big\{  -b_\H(x,\a)\cdot p_\H  -  l_\H(x,\a)   \big\}
\end{equation} where $A_\H(x)$ has been already defined above  and 
\begin{equation}\label{def:HamHTreg}
    \HTreg(x,p):=\sup_{\AHregx} \big\{-b_\H(x,\a)\cdot p_\H-l_\H(x,\a)\big\} 
\end{equation} where for $x\in\H$, 
$$
    \AHregx:=\big\{\a=(\alpha_1,\alpha_2,\mu)  \in A_\H(x)\: ; \:
     b_1(z,\alpha_1)\cdot e_N(z) \leq0  \mbox{ and }   b_2(z,\alpha_2)\cdot  e_N (z)\geq  0 
    	\big\}\,.
$$
\begin{rem} \label{remHpm}
Note that in $\HTreg$ we are considering the controls  as  in the definitions of  $H^-_1$ and $H^+_2$, 
\eqref{def:Ham1-}- \eqref{def:Ham2+},  see also Lemma \ref{linkHami} for further consequences. 
\end{rem} 
The definition of viscosity sub and super-solutions for $\HT$ and
$\HTreg$ have to be understood on $\H$ as follows: 
\begin{defi}[Viscosity subsolutions in $\H$]   \label{defi:sousolH}
    A bounded usc function $u:\H \to\R$ is a \emph{viscosity
    subsolution} of
$$
	u(x)+\HT(x,D_\H u)=0\quad \text{on}\quad
	\H
$$
if, for any $\phi\in C^1(\H)$ and any maximum point
    $x$ of $z \mapsto u(z)-\phi(z)$ in $\H$,
    one has
    \begin{equation*}
	\phi(x)+\HT\big(x,D_\H\phi(x)\big)\leq0\;.
    \end{equation*}
\end{defi}

A similar definition holds for $\HTreg$, for supersolutions and
solutions.  The result proved in \cite{BBC1} is the following.
\begin{theo}[{\cite[Theorem 2.5 and Corollary 4.4]{BBC1}}] \label{thm:Um.Up}
    Assume [H0], [H1] and [H2]. Then
\begin{enumerate}[\rm (i)]
\item
The value function $\Um$ is   the unique viscosity  solution of 
 \begin{equation}\label{eqn-minus}
 \begin{cases}
   u +H_1(x,Du) = 0     &   \hbox{ in   }\Omega_1 \,, \\
     u +H_2(x,Du) = 0     &   \hbox{ in   }\Omega_2\,, \\
       \min\{ u+ H_1(x,Du),  u+H_2(x,Du)\}\leq0  &\hbox{on   } \H \;, \\
       \max\{ u + H_1(x,Du), u+H_2(x,Du)\}\geq 0 &  \hbox{on   } \H   
\end{cases} 
\end{equation}
fulfilling 
$$ u(x)+\HT(x,D_\H u)\leq 0\quad \text{on}\quad \H, $$
in the sense of Definition~\ref{defi:sousolH}.
\item
 Moreover $\Um$ is the minimal supersolution and solution of
\eqref{eqn-minus} and $\Up$ is the maximal subsolution  and solution of
\eqref{eqn-minus}.
\end{enumerate}
\end{theo}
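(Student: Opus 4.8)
The plan is to read off both characterizations from the Dynamic Programming Principle (DPP) satisfied by $\Um$ and $\Up$ (established in \cite[Theorem 2.2, Theorem 2.3]{BBC1}), combined with a comparison argument tailored to the interface $\H$. Since both value functions are bounded and Lipschitz continuous, checking that they solve the PDE inside the open sets is classical: restricting the DPP to trajectories that stay in $\Omega_i$ over a short time and expanding in the usual way yields $u+H_i(x,Du)=0$ in the viscosity sense in $\Omega_i$ for $i=1,2$. The relaxed junction conditions \eqref{HJ-subH}--\eqref{HJ-superH} are then automatic, being exactly $H_*\le 0$ and $H^*\ge 0$ for the discontinuous Hamiltonian $H$; I would verify that $\Um$ and $\Up$ are genuine Ishii solutions, using Theorem~\ref{def:dyn} to guarantee that admissible trajectories (in particular the constant one) exist at every point of $\H$.

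The step specific to $\Um$ is the tangential subsolution inequality $u+\HT(x,D_\H u)\le 0$ on $\H$. Here I would fix $x_0\in\H$ and a test function $\phi\in C^1(\H)$ such that $\Um-\phi$ has a local maximum at $x_0$ on $\H$, and for each admissible tangent control $\a\in A_\H(x_0)$ use Theorem~\ref{def:dyn} to produce an admissible trajectory $X(\cdot)$ issued from $x_0$ with velocity $b_\H(x_0,\a)$, so that $b_\H\cdot e_N=0$ and $X$ stays on $\H$ to first order. For this particular trajectory the DPP gives $\Um(x_0)\le\int_0^h l_\H(X(t),\a(t))\,e^{-t}\d t+e^{-h}\Um(X(h))$; expanding $\int_0^h l_\H\,e^{-t}\d t\simeq h\,l_\H(x_0,\a)$ and $\phi(X(h))\simeq\phi(x_0)+h\,D_\H\phi\cdot b_\H(x_0,\a)$ and letting $h\to 0$ yields $\phi(x_0)-b_\H(x_0,\a)\cdot D_\H\phi-l_\H(x_0,\a)\le 0$; taking the supremum over $\a\in A_\H(x_0)$ produces precisely $\phi(x_0)+\HT(x_0,D_\H\phi)\le 0$. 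The Hamiltonian that appears is $\HT$ and not $\HTreg$ because $\Um$ minimizes over \emph{all} admissible trajectories, singular ones included, whereas for $\Up$ only the regular tangent controls $\AHregx$ are available.

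It remains to prove uniqueness and the extremality in (ii), which is the main obstacle. The route is a comparison principle on $\H$: I would show that any Ishii supersolution $w$ of \eqref{eqn-minus} satisfies $w\ge\Um$ and any Ishii subsolution satisfies $w\le\Up$, uniqueness of $\Um$ then following by pairing against an arbitrary supersolution the extra tangential subsolution information that $\Um$ carries. The difficulty is entirely concentrated on $\H$, where the Hamiltonian is discontinuous and a naive doubling of variables fails. I would exploit the controllability [H2] to regularize subsolutions --- sup-convolving in the tangential variable $x'$ and using the normal controllability to control the behaviour in $x_N$ --- so as to reduce to comparing regular enough functions, and then run a doubling-of-variables argument keeping the two points on the same side of $\H$, the tangential condition on $\Um$ being exactly what closes the estimate at interface points. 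For the extremality I would instead argue directly at the level of trajectories: a sub-/super-optimality comparison shows that any supersolution dominates the cost of a suitable trajectory (hence $w\ge\Um$) while any subsolution is dominated by the cost over regular trajectories (hence $w\le\Up$), the regular/singular dichotomy of Theorem~\ref{def:dyn} matching precisely the gap between $\HTreg$ and $\HT$.
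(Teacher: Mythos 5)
First, a point of reference: the paper does not prove this theorem --- it is recalled verbatim from \cite{BBC1} (Theorem 2.5 and Corollary 4.4 there), so your proposal has to be measured against the proof in that reference, whose key device the present paper itself invokes later (the ``magic lemma'', Theorem 3.3 of \cite{BBC1}, cited in the proof of Theorem~\ref{compBBC-IM}). Your overall architecture --- DPP inside each $\Omega_i$, trajectories constrained to $\H$ for the $\HT$-inequality, tangential sup-convolution plus normal controllability to regularize subsolutions, and trajectory-based super/sub-optimality principles for the extremality of $\Um$ and $\Up$ --- does match that of \cite{BBC1}.

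There is, however, a genuine gap at the heart of your comparison step. To prove uniqueness of $\Um$ you must compare an Ishii subsolution $u$ that additionally satisfies $u+\HT(x,D_\H u)\le 0$ on $\H$ against an arbitrary Ishii supersolution $v$; but at an interface point the only information Ishii's definition gives on $v$ is $\max\{v+H_1,v+H_2\}\ge 0$, which cannot be paired directly with the tangential inequality $u+\HT\le 0$: the two inequalities involve different Hamiltonians and the doubling argument does not close. The missing ingredient is precisely the dichotomy of \cite[Theorem 3.3]{BBC1}: at a minimum point on $\H$ of $v$ against a suitable test function, either (A) a one-sided inequality $v+H_1^+\ge 0$ or $v+H_2^-\ge 0$ holds --- which matches the one-sided subsolution inequalities that any Lipschitz Ishii subsolution automatically satisfies on $\H$ --- or (B) the tangential supersolution inequality $v+\HT(x,D_\H v)\ge 0$ holds, which is what pairs with the extra hypothesis on $u$ (and, with $\HTreg$ in place of $\HT$, what drives the maximality of $\Up$). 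Your sketch never identifies where a usable supersolution inequality on $\H$ comes from, so the claim that ``the tangential condition closes the estimate at interface points'' is unsupported. A second, smaller gap: to derive $u+\HT\le 0$ you need an admissible trajectory that genuinely remains on $\H$ on a time interval $[0,h]$ with initial velocity $b_\H(x_0,\a)$, not one staying on $\H$ ``to first order''; since the admissible set $A_\H(X(t))$ varies along the trajectory, this requires a construction (using [H2] to correct the normal component, or a differential-inclusion argument constrained to $\H$) before the DPP can be applied in the form you write.
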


\section{Value functions of regional control are flux-limited solutions}
\label{sect:comp.IM.BBC}

We recall that $\Uim$ is the value function of the Imbert-Monneau
control problem when there is no ``flux limiter'' $G$, while $\Uim_G$
stands for this value function when $G$ is the flux limiter. The main
result of this section is the following.
\begin{theo}\label{compBBC-IM}
Under the assumptions of Theorem~\ref{comp-IM} (comparison result), we have
\begin{enumerate}[\rm (i)]
\item $\Um \leq \Up \leq \Uim$ in $\R^N$.
\item $\Um = \Uim_G$ in $\R^N$ if $G=H_T$.
\item $\Up = \Uim_G$ in $\R^N$ if $G=H_T^{\rm reg}$.
\end{enumerate}
\end{theo}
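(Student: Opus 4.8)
The plan is to prove the three statements by combining the comparison principle (Theorem~\ref{comp-IM}) with characterizations of the value functions as unique flux-limited solutions, and with the identification of the flux limiters $H_T$ and $H_T^{\rm reg}$ via the tangential Hamiltonians. The key conceptual point is that all four value functions ($\Um$, $\Up$, $\Uim$, $\Uim_G$) are value functions of control problems on $\R^N$ that differ \emph{only} in how trajectories are allowed to behave on $\H$, so the inequalities and equalities should follow from inclusions between the corresponding admissible trajectory sets, translated into the PDE language through the flux-limited framework.

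For part (i), I would argue at the level of control problems. The trajectory set for $\Uim$ (no mixing on $\H$, independent control $b_0$) is in general more restrictive than the regular trajectory set $\Tc^{\rm reg}_x$ of the CVS-approach, since in $\Uim$ one cannot mix the dynamics of $\Omega_1$ and $\Omega_2$ on $\H$; conversely $\Tc^{\rm reg}_x \subset \Tc_x$ by definition, and minimizing over a larger set gives a smaller value. Thus $\Um = \inf_{\Tc_x}(\cdots) \leq \Up = \inf_{\Tc^{\rm reg}_x}(\cdots) \leq \Uim$. The middle inequality is immediate from $\Tc^{\rm reg}_x \subset \Tc_x$; the right inequality requires checking that any admissible trajectory for the Imbert--Monneau problem (with its independent cost $l_0$ on $\H$) can be compared to a regular trajectory, which is the step needing care depending on whether one views $\Uim$ as $\Uim_G$ for a suitable $G$ or keeps $b_0,l_0$ arbitrary.

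For parts (ii) and (iii), the strategy is to identify the relevant flux limiter and then invoke uniqueness. By Theorem~\ref{thm:Um.Up}, $\Um$ satisfies on $\H$ the tangential condition $u + H_T(x,D_\H u)\le 0$ together with the Ishii junction conditions \eqref{HJ-subH}--\eqref{HJ-superH}; I would show that, when $G = H_T$, these exactly encode the flux-limited sub/supersolution conditions of Definition~\ref{defiNEW}. The natural bridge is Lemma~\ref{boncas} together with Lemma~\ref{linkHami} from the Appendix, which link $\max(\Honemin,\Htwomin)$ (the quantities $H_1^+,H_2^-$ reduce to on $\H$) with the specific slopes $\lambda_1,\lambda_2$ and hence with $G=H_T$; Remark~\ref{remHpm} already signals that $H_T^{\rm reg}$ corresponds to using controls as in $H_1^-$ and $H_2^+$. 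Once I establish that $\Um$ (resp.\ $\Up$) is a flux-limited solution of (HJ-FL) with $G=H_T$ (resp.\ $G=H_T^{\rm reg}$), the comparison principle Theorem~\ref{comp-IM} forces $\Um = \Uim_{H_T}$ and $\Up = \Uim_{H_T^{\rm reg}}$, since $\Uim_G$ is \emph{the} unique FL-solution.

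The main obstacle I anticipate is precisely the equivalence between the two descriptions of the junction condition on $\H$: showing that a function satisfying the CVS conditions (the natural junction conditions plus the tangential inequality for $H_T$) is a flux-limited sub/supersolution in the sense of Definition~\ref{defiNEW}, and vice versa. This is delicate because the CVS-approach tests with $C^1(\R^N)$ functions and localizes \emph{on $\H$}, whereas the FL-approach uses piecewise-$C^1$ test functions in $\T$ and localizes in $\R^N$, taking into account the one-sided dynamics through $H_1^+$ and $H_2^-$. The reconciliation hinges on the algebraic identity, provided by Lemma~\ref{linkHami}, that expresses the flux limiter $G=H_T$ in terms of the optimal normal slopes $\lambda_1 < \lambda_2$ at which $H_1^-$ and $H_2^+$ meet; verifying that this makes the two notions of subsolution (and, symmetrically, supersolution) coincide is the technical heart of the argument, and the subtlety increases for $\Up$ because one must restrict to regular controls, i.e.\ replace $(H_1^+,H_2^-)$ by the pair governing $H_T^{\rm reg}$.
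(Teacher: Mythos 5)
Your overall architecture coincides with the paper's: part (i) from nestedness of the admissible dynamics/costs on $\H$ (or, in PDE terms, from the fact that $\Um,\Up$ are flux-limited subsolutions plus Theorem~\ref{comp-IM}), and parts (ii)--(iii) by showing that $\Um$ (resp.\ $\Up$) is a flux-limited solution of (HJ-FL) with $G=H_T$ (resp.\ $G=H_T^{\rm reg}$) and then invoking uniqueness. Up to that point your proposal is sound and is essentially what the paper does.

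The gap is in the mechanism you propose for the ``technical heart'', i.e.\ the verification that $\Um$ and $\Up$ satisfy the flux-limited \emph{supersolution} condition on $\H$. You plan to deduce this from an equivalence between the CVS junction conditions of Theorem~\ref{thm:Um.Up} and Definition~\ref{defiNEW}, using Lemma~\ref{linkHami} and Lemma~\ref{boncas}. This cannot work as stated: Theorem~\ref{thm:Um.Up} only provides the Ishii supersolution inequality $\max(u+H_1,u+H_2)\ge 0$ on $\H$ (tested with $C^1(\R^N)$ functions) together with a \emph{subsolution} inequality for $H_T$; since $H_1^+\le H_1$ and $H_2^-\le H_2$, the Ishii inequality is strictly weaker than the required $\max\big(v+G,\,v+H_1^+,\,v+H_2^-\big)\ge 0$, and no supersolution information for $H_T$ is available from the PDE characterization alone. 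Lemma~\ref{linkHami} is a purely algebraic statement about the Hamiltonians and Lemma~\ref{boncas} concerns the penalized subsolution $u_\mu$ inside the proof of Theorem~\ref{comp-IM}; neither supplies the missing supersolution inequality. The paper closes this gap by invoking the ``magic lemma'' of \cite{BBC1} (Theorem~3.3 there for $\Um$, and the ``particular magic lemma'', Theorem~2.5, for $\Up$): a dichotomy, derived from the Dynamic Programming Principle for the value functions themselves, asserting that at a minimum point on $\H$ either one of the one-sided inequalities for $H_1^+,H_2^-$ holds (alternative A) or the tangential inequality for $H_T$ (resp.\ $H_T^{\rm reg}$) holds (alternative B). This is information about $\Um$ and $\Up$ as value functions, not a formal consequence of the PDEs they satisfy, and your proof needs it (or an equivalent DPP-based argument). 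The subsolution side, by contrast, is handled as you expect, via \cite[Lemma~5.3]{Ba} for the $H_1^+,H_2^-$-inequalities and the tangential subsolution property from \cite{BBC1}.
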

\begin{rem}
  This result is proved in \cite{IM} in the monodimensional
  setting. In \cite[Proposition~4.1]{IM-md}, it is proved in the
  multidimensional setting that $\Um$ and $\Up$ are flux-limited
  solutions but it is not proved that the corresponding flux functions
  are precisely $H_T$ and $H_T^{\rm reg}$. The fact that
    the flux function corresponding to $U^+$ is $H_T^{\rm reg}$ is
    proved in \cite{IN}.
\end{rem}
\begin{proof} For $(i)$, the inequalities can just be seen as a consequence of the
    definition of $\Um ,\Up , \Uim$ remarking that we have a larger set of
    dynamics-costs for $\Um$ and $\Up$ than for $\Uim$. From a more pde point
    of view, applying   \cite[Lemma 5.3, p.115]{Ba}, it is easy to see that
    $\Um ,\Up$ are flux-limited subsolutions of (HJ-FL) since they are subsolutions of $$
    u(x)+ H_1^+(x, Du) \leq 0 \quad\hbox{in  }\Omega_1\; ,$$
$$ u(x)+ H_2^-(x, Du) \leq 0 \quad\hbox{in  }\Omega_2\; .$$
Then Theorem~\ref{comp-IM} allows us to conclude.

For $(ii)$ and $(iii)$, we have to prove respectively that $\Um$ is a solution
of (HJ-FL) with $G=H_T$ and $\Up$ with $G=H_T^{\rm reg}$. Then the equality is just
a consequence of Theorem~\ref{comp-IM}.

For $\Um$, the subsolution property just comes from the above argument
for the $H_1^+, H_2^-$-inequalities and from \cite{BBC1} (Theorem~2.4)
for the $H_T$-one. The supersolution inequality is a consequence of
the ``magic lemma'' (Theorem~3.3 in \cite{BBC1}): alternative {\bf A)}
implies that one of the $H_1^+, H_2^-$-inequalities hold while
alternative {\bf B)} implies that the $H_T$-one holds.

For $\Up$, the subsolution property follows from the same arguments as
for $\Um$, both for the $H_1^+, H_2^-$-inequalities and from
\cite{BBC1} (Theorem~2.4) for the $H_T^{\rm reg}$-one. The
supersolution inequality is a consequence of the ``particular magic
lemma'' for $\Up$ (Theorem~2.5 in \cite{BBC1}): alternative {\bf A)}
implies that one of the $H_1^+, H_2^-$-inequalities hold while
alternative {\bf B)} implies that the $\HTreg$-one holds.

And the proof is complete.
\end{proof}

Inequalities in Theorem~\ref{compBBC-IM}-$(i)$ can be strict: various
examples are given in \cite{BBC1}. The following one in dimension $1$
shows that we can have $\Up < \Uim$ in $\R$.
\begin{example}  Let
    $\Omega_1=(0,+\infty)$, $\Omega_2=(-\infty, 0)$. We choose 
\[
    b_1(\alpha_1)=\alpha_1 \in [-1,1]\; , \; l_1(\alpha_1)= \alpha_1\; ,
\]
\[
 b_2(\alpha_2)=\alpha_2 \in [-1,1]\; , \; l_1(\alpha_2)= -\alpha_2\; .
\]
It is clear that the best strategy is to use $\alpha_1=-1$ in
$\Omega_1$, $\alpha_2=1$ in $\Omega_2$ and an easy computation gives
\[ \Up(x) = \int_0^{+\infty} -\exp(-t)dt = -1\; ,\]
because we can use these strategies in $\Omega_1$, $\Omega_2$ but also at $0$ since the combination 
\[ \frac12  b_1(\alpha_1) +\frac12  b_2(\alpha_2) = 0\; ,\]
has a cost $-1$. In other words, the ``push-push'' strategy at $0$ allows to maintain the $-1$ cost.

But for $\Uim$, this ``push-push'' strategy at $0$ is not allowed and,
since the optimal trajectories are necessarely monotone, the best
strategy when starting at $0$ is to stay at $0$ but here with a best
cost which is $0$. Hence $\Uim(0) = 0 > \Up(0)$ and it is easy to show
that $\Uim(x) > \Up(x)$ for all $x\in \R$.
\end{example}

Theorem~\ref{compBBC-IM} can be interpreted in several ways: first the main
information is that (of course) the key point is what kind of controlled
trajectories we wish to allow on $\H$ and, depending on this choice, different
formulations have to be used for the associated HJB problem. It could be thought
that the flux-limited approach is more appropriate, in particular because of
Theorem~\ref{comp-IM} which is used intensively in the above proof. 


\section{Vanishing viscosity approximation}\label{sect:vanishing}

We begin this section with a general remark on the stability
properties of both types of solutions. On the one hand, classical
viscosity solutions are defined in such a way that they are stable
(under half relaxed limits) and this is one of their main
advantages. On the other hand, {\em in our framework}, they are not
unique, i.e. there are in general several classical viscosity
solutions lying between the minimal one $\Um$ and the maximal one
$\Up$. On the contrary, flux-limited solutions are unique but their
stability under half relaxed limits is less straightforward: we refer
to \cite{IM,IM-md} for the proof that flux-limited
solutions are stable.

The vanishing viscosity method provides us with an example
  where this difference is clear: with Ishii's definition, one can
  pass to the (semi-)limit(s) and obtain
  \eqref{HJ1}-\eqref{HJ2}-\eqref{HJ-subH}-\eqref{HJ-superH} in a
  standard way and it immediately follows from the CVS-approach that
  the (half relaxed) limits are between the minimal Ishii solution
  $\Um$ and the maximal one $\Up$. In the FL-approach, it is not clear
  what is the flux limiter of the solution of the approximating
  equation; it has to be identified before passing to the limit.

  We give two alternative proofs of the following
  result of \cite{IN} by combining the two approaches: the vanishing
  viscosity approximation converges towards the function $\Up$ defined
  in the CVS-approach. As in the proof of the comparison principle
  between flux-limited solutions, we are guided in the
    first proof of Theorem~\ref{teo:viscous} by the identification of
  specific slopes \cite{IM,IM-md}; see the introduction for more
  details and Lemma~\ref{linkHami} in the Appendix.
\begin{theo}[The vanishing viscosity limit -- \cite{IN}]\label{teo:viscous}
    Assume [H0]-[H2].  \\
    For any $\eta>0$, let $u_\eta$ be the unique  
    solution in $L^\infty \cap W^{2,r}_{loc}$ (for any $r>1$) 
    of the following problem
    \begin{equation}\label{pb:viscous}
        -\eta  \Delta  u_\eta +u_\eta+  H(x, D u_\eta ) = 
        0\quad\text{in}\quad\R^N \,,
    \end{equation}
    where $H=H_1$ in $\Omega_1$ and $H=H_2$ in $\Omega_2$. 

    Then, as $\eta \to0$, the sequence $(u_\eta)_\eta$ 
    converges locally uniformly to $\Up$ in $\R^N$.
\end{theo}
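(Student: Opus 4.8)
The plan is to combine compactness of the viscous approximations with the uniqueness built into the flux-limited framework. I would show that $(u_\eta)_\eta$ is relatively compact for local uniform convergence, that any limit point $u$ satisfies $\Um\le u\le\Up$ by the classical (Ishii) theory, and moreover that $u\ge\Up$; since $\Up$ is continuous and the limit point is then unique, local uniform convergence of the whole family follows.

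\textbf{Step 1: uniform estimates.} Comparing \eqref{pb:viscous} with the constant super/sub-solutions $\pm M_l$ (the equation is uniformly elliptic and proper, so the maximum principle applies) gives $\|u_\eta\|_\infty\le M_l$ uniformly in $\eta$. The coercivity of $H_1,H_2$ in $p$, uniformly in $x$ --- the same property used in Lemma~\ref{ss-lip} --- yields a uniform Lipschitz bound on $u_\eta$ that is not destroyed by the $-\eta\Delta$ term (a standard Bernstein- or doubling-type gradient estimate). By Ascoli's theorem, along a subsequence $u_\eta\to u$ locally uniformly, with $u$ bounded and Lipschitz.

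\textbf{Step 2: upper bound.} By stability under vanishing viscosity, $u$ is a classical viscosity solution of \eqref{HJ1}-\eqref{HJ2}-\eqref{HJ-subH}-\eqref{HJ-superH}: away from $\H$ this is the standard argument, and the junction inequalities $H_*\le0$, $H^*\ge0$ are automatically encoded in Ishii's definition. Theorem~\ref{thm:Um.Up} then gives $\Um\le u\le\Up$; in particular $u\le\Up$.

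\textbf{Step 3: lower bound (the crux).} Since $\Up=\Uim_{\HTreg}$ is, by Theorem~\ref{compBBC-IM}$(iii)$, the unique flux-limited solution of (HJ-FL) with $G=\HTreg$, the comparison principle Theorem~\ref{comp-IM} reduces the matter to showing that $u$ is a flux-limited supersolution of (HJ-FL) for this choice of $G$; this yields $\Up\le u$. Off $\H$ the $H_1$- and $H_2$-supersolution inequalities again come from stability. On $\H$, at a point $x_0$ and a touching test function I would set $A=-u(x_0)$, $p'=D_\H u(x_0)$, and invoke Lemma~\ref{linkHami} to produce the critical slopes $\lambda_2<\lambda_1$ with $H_1^-(x_0,p'+\lambda_1 e_N)=A$ and $H_2^+(x_0,p'+\lambda_2 e_N)=A$. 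Guided by the reduction-of-test-functions principle of \cite{IM,IM-md}, I would test the viscous solutions against a function carrying precisely these normal slopes --- the convex kink $h(x_N)=\lambda_1(x_N)_+-\lambda_2(x_N)_-$ already used in Case~B of the proof of Theorem~\ref{comp-IM} --- suitably smoothed into a $C^2$ function $\Phi_\delta$ that is convex across a layer of width $\delta$ around $\H$. Tracking the local minimum points $x_\eta$ of $u_\eta-\Phi_\delta$ in \eqref{pb:viscous} and letting first $\eta\to0$ and then $\delta\to0$, the monotonicity of $H_1^-$ and $H_2^+$ in $p_N$ (see Lemma~\ref{lem:H1m.H2p}) together with the defining equalities for $\lambda_1,\lambda_2$ delivers the $\HTreg$-inequality at $x_0$. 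The decisive point is that the ordering $\lambda_2<\lambda_1$ makes this kink convex, so that the diffusive term $\eta\,\Delta\Phi_\delta$ carries the favorable sign; this is exactly the mechanism by which the vanishing viscosity selects the regular controls (those entering $H_1^-$ and $H_2^+$), hence the maximal solution $\Up$ rather than the minimal one $\Um$.

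\textbf{Step 4: conclusion.} Comparison between the flux-limited subsolution $\Up=\Uim_{\HTreg}$ and the flux-limited supersolution $u$ gives $\Up\le u$, whence $u=\Up$ by Step~2. As this identifies the limit independently of the subsequence and $\Up$ is continuous, the whole family $(u_\eta)_\eta$ converges to $\Up$ locally uniformly.

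\textbf{Main obstacle.} The hard part is Step~3, namely certifying that the flux of the limit is the regular Hamiltonian $\HTreg$ and not $\HT$. The difficulty is genuinely second order: the natural junction test functions live in $\T$ and are only piecewise $C^1$, whereas \eqref{pb:viscous} must be tested against $C^2$ functions, so one has to regularize the kink built from the critical slopes and control $\eta\,\Delta\Phi_\delta$ uniformly while sending $\delta$ and $\eta$ to $0$ in the correct order. Making rigorous the selection of the regular slopes by the diffusion --- equivalently, showing that the minimum points $x_\eta$ localize on the sides where $H_1^-$ and $H_2^+$ attain the critical value $A$ --- is the only delicate point; everything else is classical.
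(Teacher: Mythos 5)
Your architecture is sound, but it follows the paper's \emph{second} route (vanishing viscosity $\Rightarrow$ Kirchhoff/relaxed junction condition $\Rightarrow$ flux-limited supersolution with $\HTreg$ via Proposition~\ref{pro}) rather than the proof actually written out for Theorem~\ref{teo:viscous}. That proof is a direct doubling-of-variables comparison between a (tangentially regularized) $\Up$ and $\underline u={\rm liminf}_*\,u_\eta$, using the kink $\chi$ built from the slopes of Lemma~\ref{linkHami}: there the sign of $\eta\Delta$ of the test function plays no role --- the viscous term only contributes an error $O(\eta/\eps^2)$ sent to zero with $\eta\ll\eps^2$, and the case where both doubled points sit on $\H$ is excluded simply because $u_\eta$ is $C^1$ while the kink has $\lambda_1>\lambda_2$. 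Your observation that the convexity of the kink gives the diffusion a favorable sign is exactly the mechanism of the Kirchhoff route, so the idea is legitimate; the problem is that your Step 3, as written, does not close.

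The gap is the replacement of the normal derivatives of the test function by the specific slopes. If $u-\psi$ has a local minimum at $x_0\in\H$ with $p_i=\frac{\partial\psi_i}{\partial x_N}(x_0)$, the modified function $y\mapsto\psi(y',0)+h(y_N)$ with $h(t)=\lambda_1t_+-\lambda_2t_-$ need \emph{not} touch $u$ from below at $x_0$ (this requires $\lambda_1-p_1$ and $p_2-\lambda_2$ to not exceed the one-sided critical slopes of $u-\psi$ at $x_0$), so the minimum points of $u_\eta-\Phi_\delta$ have no reason to converge to $x_0$ and the "tracking'' step collapses. Filling this requires either the reduction-of-test-functions theorem of \cite{IM-md} (which the paper deliberately avoids) or the critical-slope argument of Proposition~\ref{pro}$(ii)$: introduce $\kappa_1,\kappa_2$, prove the analogue of \eqref{critical}, and use the contradiction hypotheses on $H_1^+(x_0,p'+p_1e_N)$ and $H_2^-(x_0,p'+p_2e_N)$ together with the monotonicity properties of Lemma~\ref{lem:H1m.H2p} to deduce $\lambda_1-p_1<\kappa_1$ and $p_2-\lambda_2<\kappa_2$. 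Two further points: with $A=-u(x_0)$ the limiting inequality you obtain is the non-strict $-u(x_0)\le H_1^-(x_0,p'+\lambda_1e_N)=A$, which is no contradiction, so you must work with $A^\eps=\HTreg(x_0,p')+\eps<A$ as in Proposition~\ref{pro}$(ii)$; and the uniform Lipschitz bound of Step 1 is not a "standard Bernstein estimate'' here since $H$ is discontinuous in $x$ across $\H$ --- the paper avoids both this and the compactness issue by using half-relaxed limits and by regularizing $\Up$ rather than $u_\eta$.
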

\begin{rem}\label{rem:reg}
  It is worth pointing out that, as long as $\eta>0$,
  it is not necessary to impose a condition on $\H$ because of the
  strong diffusion term. Moreover, the function $u_\eta$ is $C^1$
  since it is in $W^{2,r}_{loc}$ (for any $r>1$).
\end{rem}
\begin{proof}
  We first recall that, by Theorem~\ref{thm:Um.Up}, $\Up$ is the
  maximal subsolution (and Ishii solution) of \eqref{eqn-minus} and we
  proved in Theorem~\ref{compBBC-IM} that it is the unique
  flux-limited solution of (HJ-FL) with $G=\HTreg$. We recall that
  \eqref{HJ1}-\eqref{HJ2} is completed in (HJ-FL) with the condition
\[
\max \Big( u(x)+\HTreg(x,D_\H u) , u(x)+ H_1^+(x, Du) , u(x)+
  H_2^-(x, Du) \Big) = 0 \: \quad \mbox{ on } \H
\]
 in the sense of Definition \ref{defiNEW}. Let us classically consider
 the half relaxed limits (see \cite{Ba} for a definition)
\[
 \underline{u}(x):={\rm liminf}_{ *}  \: u_\eta(x)  \quad \quad \overline{u}(x):={\rm limsup}^{*}   \:  u_\eta(x) \: . 
\]
We observe that we only need to prove the following inequality
\begin{equation} \label{equ:tesi} 
 \Up(x) \leq \underline{u}(x) \quad \mbox{ in } \R^N .
\end{equation}
Indeed, by the maximality of $\Up$ we have
$\overline{u}(x)\leq \Up(x)$ in $\R^N$; moreover, by construction we
have $\overline{u}(x)\geq \underline{u}(x)$ in $\R^N$, therefore if we
prove \eqref{equ:tesi} we can conclude that
$\Up(x) \leq \underline{u}(x) \leq \overline{u}(x)\leq \Up(x)$ which
implies that $(u_\eta)_\eta$ converges locally uniformly to $\Up$ in
$\R^N$.

Thanks to the arguments in \cite[Lemma 4.2]{BBC1} and \cite[Lemma
4.3]{BBC1} we can {\em regularize} and {\em localize} $\Up$. We can
then assume that $\Up$ is $C^1$ at least in the $x_1,\dots,x_{N-1}$
variables and that $\Up (x)- \underline{u}(x) \to -\infty$ as
$|x|\to +\infty$. For the sake of clarity, we continue to write $\Up$
for this subsolution.  Therefore, there exists $\bar{x} \in \R^N$ such
that
$$
 M:=\Up (\bar{x})- \underline{u}(\bar{x})=\sup_{x \in \R^N} \:  \big( \Up (x) -  \underline{u}(x) \big)  \:.
$$
We assume by contradiction that $M >0$.

We first remark that, necessarily, $\bar{x} \in \H$. Indeed,
otherwise, we can use classical comparison arguments for the $H_1$ or
$H_2$ equation, together with an easy localization argument, to get a
contradiction.

  Since $\Up$ is $C^1$ in the $x'$-variables,  the flux-limited subsolution condition  can be written as  
$$
 \Up(\bx)+\HTreg(\bx,D_{x'}\Up(\bx)) \leq 0  \:, 
$$
therefore by  the contradiction argument ($ \Up (\bar{x}) > \underline{u}(\bar{x}) $) we can suppose that 
$$
 -\Big(\frac{\Up(\bx)+\underline{u}(\bx)}{2} \Big)> \HTreg(\bx,D_{x'}\Up(\bx)) \: .
$$
By Lemma \ref{linkHami} in Appendix there exist two solutions
$\lambda_1,\lambda_2$, with $\lambda_2 < \lambda_1$, of the equation
$$
  \Htireg\Big(\bx,D_{x'}\Up(\bx)+\lambda e_N\Big)+ \frac{\Up(\bx)+
      \underline{u}(\bx)}{2}=0\,.
$$
Note that, since $\bar x$ and $p'=D_{x'}\Up(\bx)$ are fixed, $\lambda$
is a constant in the following construction of the test-function.  Let
$\chi(x_N,y_N)$ be defined as in \eqref{defchila} and
$$
  \psi_\eps(x,y):=\frac{|x'-y'|^2}{\eps^2}+\chi(x,y)+\frac{|x_N-y_N|^2}{\eps^2}
  + |x-\bx|^2\,.
$$
Note that  $\psi_\eps \in \T$ therefore, recalling that 
 $\underline{u}(\bx)={\rm liminf}_{ *}  \: u_\eta (\bx)$, we can consider the maximum points of 
$\Phi(x,y):=\Up(x)-\ueta(y)-\psi_\eps(x,y). $
More precisely, we set 
$$
  \Phi(x,y):=\max_{\R^N \times \R^N} ( \Up(x)-\ueta(y)-\psi_\eps(x,y) )\,.
 $$ 
 For the sake of simplicity of notation, we denote by $(x,y)$ a
 maximum point of $\Phi$ and we already notice that $x,y \to \bx$ as
 $\eps, \eta \to 0$.

We now consider 5 different  cases, depending on the position of $(x,y)$. 

\noindent\textbf{CASE 1/2: }$x_N>0$ and $y_N\leq0$ (or $x_N<0$ and $y_N\geq0$). We use
the subsolution condition for $\Up$ in $\Omega_1$ which gives
\[
H_1\Big(x,\frac{2(x'-y')}{\eps^2}+\lambda_1e_N+
\frac{2(x_N-y_N)}{\eps^2}e_N+o(1)\Big)+\Up(x)\leq0 \; .
\]
But, since $\Up$ is regular in the $x'$-variables, at a maximum point
of $\Phi$, we have (for some $o(1)$ due to the term $|x-\bx|^2$):
\begin{equation}\label{eq:grad.vfb}
    D_{x'}\Up(x)=2\frac{(x'-y')}{\eps^2}+o(1)\,.
\end{equation}
Therefore we can replace the $(x'-y')$-term by the 
gradient of $\Up$. Moreover, using that $H^-_1\leq H_1$, $H^-_1$ is non decreasing
and $(x_N-y_N)>0$ we get
\[
H^-_1\Big(x,D_{x'}\Up(x)+\lambda_1e_N
+o(1)\Big)\leq H_1\Big(x,D_{x'}\Up(x)+\lambda_1e_N+
\frac{2(x_N-y_N)}{\eps^2}e_N+o(1)\Big)\leq-\Up(x)\,.
\]
On the other hand, we recall that, by construction (see \cite{BBC1}),
the function $D_{x'}\Up$ is continuous, not only in $x'$ but also in
$x_N$. Therefore the regularity assumption on $H^-_1$ and the
construction of $\lambda_1$ yield
 $$
  H^-_1\Big(x,D_{x'}\Up(x)+\lambda_1e_N+o(1)\Big)= -\frac{\Up(\bx)+
      \underline{u}(\bx)}{2}+o(1)
 $$
 therefore,  since we assume
that $\Up(\bx)>\underline{u}(\bx)$, we obtain a contradiction for $\eps, \eta$ small enough. \\
The case $x_N<0$ and $y_N\geq0$ is completely similar,
using $H_2$ instead of $H_1$.  

\noindent\textbf{CASE 3/4:} $x_N=0$ and $y_N>0$ (or $<0$). We use the supersolution viscosity
inequality for $\ueta$ at $y$, replacing again the $(x'-y')$-term by $D_{x'}\Up$:
\begin{equation}\label{ineq.ud}
  -\frac{\eta C}{\eps^2}+
  H_1\Big(y,D_{x'}\Up(x)+\lambda_1e_N+
  \frac{2(x_N-y_N)}{\eps^2}+o(1)\Big)+\ueta(y)\geq0\,.
\end{equation}
We first want to show that we can replace $H_1$ by $H_1^+$ in this
inequality.  Indeed, using successively that $H_1^-(y,\cdot)$ is
nondecreasing (in the $p_N$-variable), the continuity of $D_{x'}\Up$,
the fact that $x_N-y_N=-y_N<0$, the definition of $\lambda_1$, the
regularity of $H_1^-$ and the contradiction assumption, we have
\begin{align*}
 -\frac{\eta C}{\eps^2}+
  H^-_1\Big(y,D_{x'}\Up(x)+\lambda_1e_N+
  \frac{2(x_N-y_N)}{\eps^2}+o(1)\Big)+\ueta(y) \\
\leq  -\frac{\eta C}{\eps^2}+ H^-_1\Big(\bx,D_{x'}\Up(\bx)+\lambda_1e_N \Big)+\ueta(y)+o(1)\\
\leq   -\frac{\eta C}{\eps^2}- \frac{\Up(\bx)+
      \underline{u}(\bx)}{2} +\ueta(y)+o(1) < 0
\end{align*}
for $\eta,\eps$ and $\frac{\eta}{\eps^2}$ small enough. We deduce that
\eqref{ineq.ud} holds true with $H^+_1$. \\
Moreover, by the subsolution condition  of $\Up$ on $\H$ we have 
$$
 H^+_1\Big(x,D_{x'}\Up(x)+\lambda_1e_N+ \frac{2(x_N-y_N)}{\eps^2}+o(1)\Big)+\Up(x)\leq0\,
$$
therefore the conclusion follows by standard arguments  putting together the two
inequalities for $ H^+_1$ and letting first $\eta$ and then 
$\eps$ tend to zero. If $y_N< 0$, we can repeat the same argument using $H^-_2$.

\noindent \textbf{CASE 5:} $x_N=y_N=0$.  Let us remark that this case
is not possible. We observe that $\ueta$ is regular (see
Remark~\ref{rem:reg}) therefore if we have a minimum point of
$x \mapsto \ueta-(\Up-\psi_\eps(x,y) )$, by construction of the
function $\chi$ we have $\lambda_1\geq \lambda_2$.  Since by
definition (Lemma \ref{linkHami} below) we have
$\lambda_2 < \lambda_1$ we obtain a contradiction.


\end{proof}

\section{On the Kirchoff condition}

The Kirchoff condition is used in \cite{IM,IM-md} in order to pass to
the limit in the vanishing viscosity method. The connection between
the Kirchoff condition and a flux-limited solution is made
afterwards. In this section, we show that the Kirchoff condition leads
to the $\Up$-solution.  This Kirchoff condition is not easy to express
in our context since we would have to write
$$ -\frac{\partial u}{\partial x_N}-\frac{\partial u}{\partial
  (-x_N)}= 0\quad\hbox{on }\H\;,$$
but of course this has to be understood with test-functions in $\T$,
which are not $C^1$ in the normal variable across the interface.  The
precise definition on $\H$ is the following
\begin{defi}[Solutions for the Kirchoff condition]
  \label{defiKC} An upper semi-continuous (usc), bounded function
  $u: \R^N \rightarrow \R$ is a \emph{subsolution for the Kirchoff
    Condition on $\H$} if for any test-function $\psi \in \T$ and any
  local maximum point $x \in \H$ of $x \mapsto (u-\psi)(x)$ in $\R^N$,
  we have
\begin{equation}
  \min \Big( -\frac{\partial \psi_1}{\partial x_N}+\frac{\partial \psi_2}{\partial x_N} ,  u(x)+ H_1(x, D\psi_1) ,  
  u(x)+ H_2(x, D\psi_2) \Big) \leq 0  \: .
\end{equation}
We say that a lower semi-continuous (lsc), bounded function
$v: \R^N \rightarrow \R$ is an \emph{supersolution for the Kirchoff Condition on $\H$} if for any function $\psi \in \T$ and any local
mininum point $x \in \H$ of $x \mapsto (v-\psi)(x)$ in $\R^N$, we have
\begin{equation}\label{cond.super.kirchoff}
  \max \Big(-\frac{\partial \psi_1}{\partial x_N}+\frac{\partial \psi_2}{\partial x_N} ,   v(x)+ H_1(x, D\psi_1) ,  
  v(x)+ H_2(x, D\psi_2) \Big) \geq 0  \: .
\end{equation}
\end{defi}
\begin{rem}
In \cite{IM,IM-md,IN}, an equivalent notion of solutions
  is introduced for general (and generalized) junction
  conditions. They are referred to as \emph{relaxed solutions}.
\end{rem} 
The following result describes the link with
  flux-limited solutions.  In particular, the proposition below
  implies that solutions for the Kirchoff conditions are unique. It
  also implies that the vanishing viscosity limit selects $\Up$
  (Theorem~\ref{teo:viscous}).
\begin{pro}\label{pro}Assume [H0]-[H2]. \\
$(i)$ If $u$ is a subsolution for the Kirchoff Condition then $u$ is a
flux-limited subsolution with $\HTreg$.\\
$(ii)$  If $v$ is a supersolution for the Kirchoff Condition then $v$ is a
flux-limited supersolution with~$\HTreg$.
\end{pro}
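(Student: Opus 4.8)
The plan is to treat the two one-sided inequalities and the tangential one separately. For a subsolution, a classical subsolution of $u+H_1=0$ in $\Omega_1$ automatically satisfies $u(x)+H_1^+(x,D\psi_1)\le 0$ on $\H$, and a subsolution in $\Omega_2$ gives $u(x)+H_2^-(x,D\psi_2)\le 0$ (this is exactly the content of the Remark following Definition~\ref{defiNEW}, see \cite{BBC1}). Hence, writing $A:=-u(x)$ and $p':=D_\H\psi(x)$, the only remaining flux-limited subsolution inequality in $(i)$ is $\HTreg(x,p')\le A$. For the supersolution statement $(ii)$, since only the maximum of the three flux-limited quantities has to be $\ge 0$, I argue directly by contradiction, assuming all three are negative, and in particular $\HTreg(x,p')<B$ with $B:=-v(x)$.

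The key analytic input is the variational description of $\HTreg$ underlying Lemma~\ref{linkHami} and Remark~\ref{remHpm}: for fixed $x,p'$ the maps $\lambda\mapsto H_1^-(x,p'+\lambda e_N)$ and $\lambda\mapsto H_2^+(x,p'+\lambda e_N)$ are respectively nondecreasing and nonincreasing (Lemma~\ref{lem:H1m.H2p}), and $\HTreg(x,p')$ equals their unique crossing value. Consequently $\HTreg(x,p')>A$ is equivalent to the existence of a nonempty open interval $I=(\lambda_1,\lambda_2)$ of slopes on which $H_1^-(x,p'+\lambda e_N)>A$ \emph{and} $H_2^+(x,p'+\lambda e_N)>A$ simultaneously (where $\lambda_1,\lambda_2$ are the slopes produced by Lemma~\ref{linkHami} at level $A$), and symmetrically $\HTreg(x,p')<B$ is equivalent to a nonempty interval on which both stay $<B$.

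For part $(i)$ I assume $\HTreg(x,p')>A$ and pick $r_1<r_2$ inside the interval $I$. I then build $\tilde\psi\in\T$ with $D_\H\tilde\psi(x)=p'$ and normal slopes $\partial_{x_N}\tilde\psi_1(x)=r_1$, $\partial_{x_N}\tilde\psi_2(x)=r_2$, using the broken function $h(t)=r_1 t_+-r_2 t_-$ exactly as in \eqref{defchila}, and (granting admissibility, treated below) I apply the Kirchoff subsolution inequality of Definition~\ref{defiKC} to $\tilde\psi$ at $x$. Since $r_1,r_2\in I$ we get $H_1(x,p'+r_1e_N)\ge H_1^-(x,p'+r_1e_N)>A$ and $H_2(x,p'+r_2e_N)\ge H_2^+(x,p'+r_2e_N)>A$, while the flux term is $-r_1+r_2>0$; thus all three entries of the minimum are strictly positive, a contradiction. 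Part $(ii)$ is the mirror image: the assumption forces $\HTreg(x,p')<B$, one selects $r_2<r_1$ in the corresponding interval so that the \emph{full} Hamiltonians satisfy $H_1(x,p'+r_1e_N)<B$ and $H_2(x,p'+r_2e_N)<B$, builds the matching test function touching $v$ from below, and contradicts the $\max\ge0$ supersolution inequality.

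The main obstacle — the only genuinely delicate point — is the admissibility of $\tilde\psi$, i.e. ensuring that after resetting the normal slopes to $r_1,r_2$ the point $x$ remains a local maximum of $u-\tilde\psi$ (resp. minimum of $v-\tilde\psi$). This is not automatic, since from a given upper test function one can only \emph{steepen} the normal slopes ($r_1\ge \partial_{x_N}\psi_1$, $r_2\le \partial_{x_N}\psi_2$), whereas the slopes $r_1,r_2\in(\lambda_1,\lambda_2)$ may be flatter. I expect to resolve this precisely as in the proof of Lemma~\ref{boncas}: add the penalization $h$ together with a localization term, and use the coercivity of $H_1,H_2$ and the \emph{interior} sub/supersolution property to show that the maximum (resp. minimum) of the penalized function cannot escape into $\Omega_1$ or $\Omega_2$ and must sit on $\H$. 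The interior equation constrains the admissible normal derivatives of $u$ at $x$ to the sublevel set $\{H_i\le A\}$, which is exactly what makes the critical slope coming from $\Omega_1$ lie below $\lambda_1$ and the one from $\Omega_2$ lie above $\lambda_2$; this brackets the admissible range so that $r_1<r_2$ can be chosen inside $I$ while keeping $x$ as the contact point. The supersolution case runs the same coercivity/localization argument with all inequalities reversed.
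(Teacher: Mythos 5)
Your part $(i)$ is essentially sound and close in spirit to the paper's argument: both rest on a test function that is broken-linear in $x_N$ with a strictly positive Kirchhoff term $-r_1+r_2>0$, on the monotonicity of $H_1^-$ and $H_2^+$ in the $p_N$-variable (Lemma~\ref{lem:H1m.H2p}), and on the fact that the interior subsolution inequality forces the critical normal slope from $\Omega_1$ below $\inf I$ and the one from $\Omega_2$ above $\sup I$, so that your slopes $r_1<r_2$ in $I$ are admissible. (The paper instead works at the crossing slope $s^*\mp\kappa$ and lets the maximum escape into $\Omega_i$, handling that case with $H_1\ge H_1^-$ and the sign of the penalization gradient; your level-$A$ variant is equivalent. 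Do record that Kirchhoff subsolutions are Lipschitz, which the paper needs to get the $H_1^+,H_2^-$ inequalities via Lemma~\ref{JAB}.)

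The genuine gap is in part $(ii)$: ``the same coercivity/localization argument with all inequalities reversed'' does not transfer. First, coercivity forces \emph{maxima} of $u-\tilde\psi$ onto $\H$ or renders interior maxima harmless; it does nothing for minima of $v-\tilde\psi$. If the minimum escapes into $\Omega_1$, the interior inequality reads $v(z)+H_1\big(z,p'+r_1e_N-\tfrac{2z_N}{\eps^2}e_N+o(1)\big)\ge0$ with the \emph{full} $H_1=\max(H_1^-,H_1^+)$, and the penalization gradient now shifts the slope into the region where $H_1^+$ increases; your interval only controls $H_1^-$ and $H_2^+$, so no contradiction follows. Second, even keeping the contact point at $x$ via critical slopes, the reversed inequality $H_1(x,p'+(p_1+\kappa_1)e_N)\ge B$ only yields $H_1^-\ge B$ \emph{or} $H_1^+\ge B$ there; to conclude that $p_1+\kappa_1$ lies above the level-$B$ slope of $H_1^-$ (hence that your $r_1$ is admissible) you must exclude the second alternative, and the only way to do so is to invoke the remaining pieces of the contradiction hypothesis, namely $H_1^+(x,p'+p_1e_N)<B$ and $H_2^-(x,p'+p_2e_N)<B$, together with the monotonicity of $H_1^+$ (nonincreasing) and $\kappa_1\ge0$. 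This is precisely what the paper does via the critical-slope lemma of \cite[Lemma~2.8]{IM-md}, with a separate treatment of the case $\kappa_i=+\infty$, which coercivity no longer excludes for supersolutions. Your sketch uses neither the one-sided hypotheses nor this monotonicity, so part $(ii)$ as written does not close.
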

\begin{proof}
To prove $(i)$, we first notice that subsolutions for the Kirchoff Condition are
Lipschitz continuous; to prove it, we just modify the classical proof in the
following way: for $0<\kappa \ll 1$ and $x\in \R^N$, we consider the maximum points of the function
$$ y\mapsto u(y)-C|y-x| -\kappa\exp(-2y_N^+ - y_N^-)\; ,$$
the new, ``small'' term $\kappa\exp(-2y_N^+ - y_N^-)$ being there to avoid that the inequality 
$$-\frac{\partial \psi_1}{\partial x_N}+\frac{\partial \psi_2}{\partial x_N}\leq 0\;$$
holds. Using this remark, the coercivity of $H_1,H_2$ and a large enough $C$,
allows to conclude that, for any $y$ (and $x$) $$u(y)-C|y-x| -\kappa\exp(-2y_N^+
- y_N^-) \leq u(x)\; ,$$
which proves the Lipschitz continuity by letting $\kappa$ tend to $0$.

Next we use the following lemma which is a direct consequence of \cite[Lemma~5.3]{Ba}.

\begin{lem}\label{JAB}Assume [H0]-[H2]. If $u$ is a Lipschitz continuous subsolution of 
\[ \left\{ \begin{array}{l}
u+H_1(x,Du)=0\quad\hbox{ in }\Omega_1\; , \\
 u+H_2(x,Du)=0\quad\hbox{ in }\Omega_2\; ,
\end{array} \right.
\]
then it is a subsolution of
\( \max(u+H^+_1(x,Du),u+H^-_2(x,Du))=0\) on $\H$.
\end{lem}
In view of Lemma~\ref{JAB}, it is enough to show that
\[
	u(x)+\HTreg\big(x,D_{\H}\psi (x',0)\big)\leq0\;,
\]
at any strict local maximum point $x=(x',0)$ of $ y \mapsto u(y)-\psi(y)$ in
$\R^N$ where  $\psi\in \T$.

In particular, $x'$ is a strict local maximum point of $y'\mapsto
u(y',0)-\psi(y',0)$ on $\H$ and we consider the function 
\begin{equation}
    \label{test}
y=(y',y_N) \mapsto u(y)-\psi(y',0)-\chi(y_N)-\frac{(y_N)^2}{\eps^2}\; ,
\end{equation}
with, for some small $\kappa >0$
$$ \chi(y_N):= \left\{\begin{array}{ll}
 (\lambda -\kappa)y_N & \mbox{ if }  y_N \geq 0 ,\\
 (\lambda +\kappa)y_N & \mbox{ if }  y_N < 0 , \\
 \end{array}
 \right.
 $$
 where $\lambda$ is given by Lemma \ref{lem:H1m.H2p} as follows: let
 $(x,p'):=(x,D_{\H}\psi(x',0))$ we choose $\lambda=s^*$ in the three cases 1,
 2 and 3.  Note that this is, roughly speaking, the minimal
 intersection point between $H^-_1$ and $H^+_2$ and therefore we have
 \begin{equation}\label{condlamb}
  \HTreg (x,D_{\H}\psi(x',0))=H^-_1(x,D_{\H}\psi(x',0)+\lambda e_N)=
  H^+_2(x,D_{\H}\psi(x',0)+\lambda e_N) \: .
 \end{equation}
  
 By standard arguments, the function defined in \eqref{test} has a
 maximum point $z=(z',z_N)$ near~$x$. Of course, $z$ depends on $\eps$
 but we drop this dependence for the sake of simplicity. Since $x'$ is a strict local maximum point of $y'\mapsto
u(y',0)-\psi(y',0)$ on $\H$, it is clear that $z\to x$ as $\eps \to 0$.

 The first case we examine is when $z_N=0$, where necessarily
 $z=x$. By the definition of subsolution for the Kirchoff condition,
 we have
\[
\min(-(\lambda -\kappa)+(\lambda +\kappa),
u(x)+H_1(x,D_{\H}\psi(x',0)+(\lambda-\kappa)e_N),
u(x)+H_2(x,D_{\H}\psi(x',0)+(\lambda+\kappa)e_N))\leq 0.
\]
But $-(\lambda -\kappa)+(\lambda +\kappa)=2\kappa >0$, therefore
\begin{equation} \label{caso1}
\min(u(x)+H_1(x,D_{\H}\psi(x',0)+(\lambda-\kappa)e_N),
u(x)+H_2(x,D_{\H}\psi(x',0)+(\lambda+\kappa)e_N))\leq 0.
\end{equation}
Letting $\kappa \to 0$ yields the desired inequality thanks to \eqref{condlamb} since $H_1 \geq H^-_1$ and $H_2 \geq H^+_2$. 

 If $z_N>0$,  by  the subsolution condition in $\overline{\Omega_1}$ we have
\begin{equation} \label{caso2}
 H_1\Big(z,D_{\H}\psi(z',0)+(\lambda-\kappa)e_N+ \frac{2z_N}{\eps^2}\Big)
+u(z)\leq 0\; ,
\end{equation}
while if $z_N < 0$ we obtain 
\begin{equation} \label{caso3} 
H_2\Big(z,D_{\H}\psi(z',0)+(\lambda+\kappa)e_N+ \frac{2z_N}{\eps^2}\Big)
+u(z)\leq 0\; . 
\end{equation}
We claim now that the conclusion follows from \eqref{condlamb} with similar arguments in these two cases. For instance  if 
\eqref{caso2} holds   according to Lemma~\ref{linkHami} and using the fact that $H^-_1$ is nondecreasing
\begin{eqnarray*}
H_1\Big(z,D_{\H}\psi(z',0)+(\lambda-\kappa)e_N+ \frac{2z_N}{\eps^2}\Big) 
& \geq &H^-_1\Big(z,D_{\H}\psi(z',0)+(\lambda-\kappa)e_N+ \frac{2z_N}{\eps^2}\Big)\\
& \geq & H^-_1(z,D_{\H}\psi(z',0)+(\lambda-\kappa)e_N) \\
& = &\HTreg (x,D_{\H}\psi(x',0)) + o_\eps(1) + o_\kappa(1)\; .
\end{eqnarray*}
Therefore
$$ \HTreg (x,D_{\H}\psi(x',0)) + o_\eps(1) + o_\kappa(1)+u(z)\leq 0\; .$$
And the conclusion follows by letting first $\eps$ tend to $0$ and then
$\kappa$ tend to $0$.  Of course, an analogous computation is valid
for $H_1$ even if $z_N=0$ or for $H_2$ if $z_N\leq 0$
and the proof of $(i)$ is complete in cases \eqref{caso2} and \eqref{caso3}.

We now turn to the proof of $(ii)$. Consider a test function $\psi\in\T$ such 
that $v-\psi$ reaches a local strict minimum at $x=(x',0)$. We are going to prove
that for all $\eps >0$, 
\begin{equation}\label{desired}
\max( v (x) + \HTreg (x,p')+\eps, v(x) + H_1^+ (x,p' + p_1 e_N), 
v(x) + H_2^- (x,p'+p_2e_N)) \ge 0
\end{equation}
where $p' = D_{\H} \psi (x)$ and $\displaystyle p_i = \frac{\partial \psi_i}{\partial x_N}(x)$. 

It is convenient to write $\bar A = - v(x)$ and $A^\eps = \HTreg (x,p') + \eps$. 
We argue by contradiction by assuming that \eqref{desired} does not hold true,
which means
\begin{equation}\label{absurd}
 A^\eps < \bar A, \quad H_1^+(x,p' + p_1e_N) < \bar A, \quad H_2^- (x,p'+p_2e_N) < \bar A.
\end{equation}

Since $A^\eps>\HTreg(x,p')$ we can find $\lambda_1^\eps > \lambda_2^\eps$ such
that (see Appendix) 
\[A^\eps = H_1^-(x,p'+\lambda_1^\eps e_N) = H_2^+(x,p'
+\lambda_2^\eps e_N) = H_1(x,p'+\lambda_1^\eps e_N) = H_2(x,p'
+\lambda_2^\eps e_N)\,.
\] 

Now we use the notion of critical slopes introduced 
in \cite[Lemma~2.8]{IM-md}: we set 
$$ \kappa_1:=\liminf_{\genfrac{}{}{0pt}{}{y\to x}{y_N>0}} \frac{(v(y)-\psi(y))-(v(x)-\psi(x))}{y_N}\quad \hbox{and} \quad \kappa_2:=\liminf_{\genfrac{}{}{0pt}{}{y\to x}{y_N<0}} \frac{(v(y)-\psi(y))-(v(x)-\psi(x))}{-y_N}\; .$$
By definition, $\kappa_1,\kappa_2 \geq 0$ can be infinite and, for any $q_1\leq \kappa_1$ and $q_2 \leq \kappa_2$, there exists a function $\phi =(\phi_1,\phi_2) \in \T$ such that, for $i=1,2$, $D_{\H} \phi_i (x) =0$ and $\frac{\partial \phi_i}{\partial x_N}(x)=q_i$ and the function $y\mapsto v(y)-\psi(y)-\phi(y)$
has a strict local minimum point at $x$.

The proof of this claim is analogous to the proof of the equivalence of the two classical definitions of viscosity supersolutions by subdifferential and by testing with smooth functions : if $\chi:\R \to \R$ is defined by
$$ \chi(s) =\begin{cases} q_1 s & \text{if }s\geq 0\,,\\
        q_2 s & \text{if }s\leq 0\,,\end{cases}
$$
then, by the definition of $\kappa_1,\kappa_2$, we have 
$$ (v(y)-\psi(y))-(v(x)-\psi(x)) \geq \chi(y_N) + |y_N|o(1)=\chi(y_N) + o(|y-x|)\; ,$$
and the proof consists in regularizing the $o(|y-x|)$ in a suitable way.

If these suprema are finite (otherwise the following claim just follows from the coercivity properties for $H_1,H_2$ by taking $\kappa_1,\kappa_2$ large enough), we claim that
\begin{equation}\label{critical}
v (x) + H_1 (x,p'+(p_1+\kappa_1)e_N ) \ge 0 \quad \text{ and } \quad
v(x) + H_2 (x,p'+(p_2 -\kappa_2)e_N) \ge 0\; .
\end{equation}
Indeed these properties are obtained by looking at 
$y\mapsto
v(y)-\psi(y)- \phi (y)-\eta y_N^+$ and $y\mapsto
v(y)-\psi(y)- \phi (y)-\eta y_N^-$ for $\eta$ small enough where $\phi \in \T$ is the function defined as above but for $q_1=\kappa_1$ and $q_2=\kappa_2$.

By definition of the critical slopes, the maximum is necessarily achieved in 
$\Omega_1$ in the first case and in $\Omega_2$ for the second one, otherwise the minimum property would lead to a contradiction to the liminf definition of $\kappa_1,\kappa_2$. Letting
$\eta$ tend to $0$ in the viscosity inequalities yields the claim.

Then we can write \eqref{critical} as
\( H_1 (x,p'+(p_1+\kappa_1)e_N) \ge \bar A \)  and \( H_2 (x,p'+(p_2
    -\kappa_2)e_N) \ge \bar A\).
Since $\kappa_1\geq 0$ and $H_1^+$ is non-increasing in the $e_N$-direction, by
\eqref{absurd} we get
$$H_1^+(x,p'+(p_1+\kappa_1)e_N)\leq H_1^+(x,p' + p_1e_N) < \bar A\,.$$
Therefore,
necessarily $H_1 (x,p'+(p_1+\kappa_1)e_N)=H_1^-(x,p'+(p_1+\kappa_1)e_N)\geq \bar
A$ and in the same way, $ H_2 (x,p'+(p_2 -\kappa_2)e_N)
=H_2^+(x,p'+(p_2-\kappa_2)e_N)\geq
\bar A$.

Using an analogous monotonicity argument, 
$H_1^-(x,p'+(p_1+\kappa_1)e_N)\geq\bar A>A^\eps$ implies that
$p_1 + \kappa_1 > \lambda_1^\eps$ and, in the same way,
$p_2 - \kappa_2 < \lambda_2^\eps$. Therefore $q_1= \lambda_1^\eps-p_1 < \kappa_1$, $q_2 = p_2 - \lambda_2^\eps < \kappa_2$ and if $\phi\in \T$ is the function defined as above with $q_1$ and $q_2$, the function
 $ y\mapsto v(y)-\psi(y)-\phi (y)$
reaches a minimum at $x=(x',0)$. We can use $\psi-\phi \in\T$ as a test-function for $v$ in 
the Kirchoff condition \eqref{cond.super.kirchoff}. 
From $\lambda_1^\eps>\lambda_2^\eps$, it follows that at $x$, the first term
gives a negative contribution
$$-\frac{\partial(\psi_1-\phi_1)}{\partial x_N}
+\frac{\partial (\psi_2-\phi_2)}{\partial x_N}=-\lambda_1^\eps + \lambda_2^\eps
<0\,.$$
Hence, the supersolution condition reduces to \[ \max (v (x) + H_1
    (x,p'+\lambda_1^\eps e_N), v(x) + H_2 (x,p'+\lambda_2^\eps e_N)) \ge 0 \]
which means $v(x) + \HTreg(x,p') + \eps \ge 0$ by the definition of
$\lambda_1^\eps,\lambda_2^\eps$. But then we reach a contradiction with 
$A^\eps<\bar A$.  Then, $(ii)$ follows from letting $\eps$ tend to zero in
\eqref{desired}.
\end{proof}

\appendix

\section{Appendix}

In this appendix, we decompose any vector $p\in\R^N$ as $p=(p',p_N)$, but also
as $p=p'+p_Ne_N$ (with a slight abuse of notation). We will concentrate here
only on $H_1^-$ and $H_2^+$, defined respectively 
by \eqref{def:Ham1-} and \eqref{def:Ham2+}. 

Notice first that for any fixed $(x,p')$, the functions
$s\mapsto H_1(x,p'+se_N)$ and $s\mapsto H_2(x,p'+se_N)$ are convex and
coercive, hence each of them reaches its minimum. We introduce the following
notation:
$$\begin{aligned}
    \Honemin(x,p')& :=\min_{s\in\R}H_1(x,p'+se_N)\,,\\
    \Htwomin(x,p')& :=\min_{s\in\R}H_2(x,p'+se_N)\,.
\end{aligned}$$
Since the minimum can possibly be attained on a whole interval, we
set
$$
\begin{aligned}
    m_1(x,p'):= & \sup\big\{s\in\R:H_1(x,p'+s e_N)=\Honemin(x,p')\big\}\,,\\
    m_2(x,p'):= & \inf\big\{s\in\R:H_2(x,p'+s e_N)=\Htwomin(x,p')\big\}\,,
\end{aligned}
$$
and in the following for $\Honemin,\Htwomin,m_1,m_2$ we skip the reference to
$(x,p')$ since this pair of variable is always fixed.
\begin{lem}\label{lem:H1m.H2p}
  Assume [H0]-[H2] and [HG]. Then the Hamiltonians $H^-_1$ and $H^+_2$
  satisfy
    $$
    H_1^-(x,p)=\begin{cases} \Honemin & \text{if }p_N\leq m_1\,,\\
        H_1(x,p) & \text{if }p_N>m_1\,,\end{cases}
    \qquad
    H_2^+(x,p)=\begin{cases} H_2(x,p) & \text{if }p_N\leq m_2\,,\\
        \Htwomin & \text{if }p_N>m_2\,.\end{cases}
    $$
    As a consequence, $H^-_1(x,p)$ is nondecreasing in the $p_N$-variable, and
    $H_2^+$ is nonincreasing in the $p_N$-variable. Moreover $H^-_1(x,p)$ is strictly increasing in the $p_N$-variable for $p_N>m_1$ and
    $H_2^+$ is strictly decreasing in the $p_N$-variable for $p_N<m_2$\end{lem}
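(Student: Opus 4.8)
The plan is to reduce everything to the one–variable convex functions $s\mapsto H_1(x,p'+se_N)$ and $s\mapsto H_2(x,p'+se_N)$ and to exploit the sign of the slopes produced by the controls. First I would write, for fixed $(x,p')$ and with the notation $\sigma(\alpha_1):=b_1(x,\alpha_1)\cdot e_N$ and $c(\alpha_1):=-b_1(x,\alpha_1)\cdot p'-l_1(x,\alpha_1)$,
\[ H_1(x,p'+se_N)=\sup_{\alpha_1\in A_1}\big(c(\alpha_1)-s\,\sigma(\alpha_1)\big), \]
so that $H_1^-$ is the supremum over the controls with $\sigma\le 0$ (affine functions of $s$ with nonnegative slope) and $H_1^+$ the supremum over those with $\sigma>0$ (slopes $<0$). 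Consequently $s\mapsto H_1^-(x,p'+se_N)$ is nondecreasing and convex, $s\mapsto H_1^+(x,p'+se_N)$ is nonincreasing, and $H_1=\max(H_1^-,H_1^+)$ because the two control sets partition $A_1$. The convexity and coercivity of $s\mapsto H_1(x,p'+se_N)$ have already been recorded just before the statement, so $\Honemin$ and $m_1$ are well defined.

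The key analytic fact I would isolate is the lower bound $H_1^-(x,p'+se_N)\ge \Honemin$ for every $s$. I would prove it by showing that the minimum $\Honemin$ is attained by a control tangent to $\H$: since $s\mapsto H_1(x,p'+se_N)$ is convex with minimum at $m_1$, Danskin's formula gives $0$ in its subdifferential, i.e.\ a convex combination of controls optimal at $m_1$ whose normal velocities $\sigma$ average to $0$; the convexity assumption [H2] turns this combination into a single control $\bar\alpha_1$ with $\sigma(\bar\alpha_1)=0$ and $c(\bar\alpha_1)=\Honemin$. Since such a tangent control lies in the admissible set $\{\sigma\le 0\}$ of $H_1^-$ and contributes the constant value $c(\bar\alpha_1)=\Honemin$, independently of $s$, the claimed lower bound follows. (Equivalently, this is the min–max identity $\min_s H_1(x,p'+se_N)=\sup_{\sigma=0}c$.) This is the step I expect to be the main obstacle, since it is the only place where the convexity of the control images is genuinely used.

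With the lower bound in hand, the two regimes are routine. For $p_N=s>m_1$ I note that $H_1^+(x,p'+m_1e_N)\le H_1(x,p'+m_1e_N)=\Honemin$; as $H_1^+$ is nonincreasing while $H_1(x,p'+se_N)>\Honemin$ for $s>m_1$ (by definition of $m_1$ as the right endpoint of the minimum set), the maximum defining $H_1$ must come from $H_1^-$, hence $H_1^-=H_1$ there. Letting $s\downarrow m_1$ and using continuity of convex functions gives $H_1^-(x,p'+m_1e_N)=\Honemin$. For $s\le m_1$, monotonicity yields $H_1^-(x,p'+se_N)\le H_1^-(x,p'+m_1e_N)=\Honemin$, and combined with the lower bound this forces the equality $H_1^-=\Honemin$. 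The monotonicity assertions then follow at once: $H_1^-$ is constant equal to $\Honemin$ on $\{s\le m_1\}$ and coincides with the strictly increasing function $H_1(x,p'+se_N)$ on $\{s>m_1\}$, so it is globally nondecreasing and strictly increasing past $m_1$. The statements for $H_2^+$ are obtained by the symmetric argument, exchanging the roles of the slope signs (tangent controls now sit in $\{\sigma\ge 0\}$) and replacing the right endpoint $m_1$ by the left endpoint $m_2$ of the minimum set of $s\mapsto H_2(x,p'+se_N)$.
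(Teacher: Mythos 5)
Your proof is correct and follows essentially the same route as the paper's: both hinge on producing, via convex analysis at the minimizer of $s\mapsto H_1(x,p'+se_N)$ together with the convexity part of [H2], a tangent control (one with $b_1\cdot e_N=0$) realizing $\Honemin$, which gives the key lower bound $H_1^-\ge\Honemin$, and then on the monotonicity of $H_1^{\pm}$ in the $p_N$-variable to identify the two regimes. Your write-up is in fact slightly more economical, since by arguing directly on $H_1^-$ you never need the companion bound $H_1^+\ge\Honemin$, for which the paper must invoke a little extra controllability.
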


Figure \ref{fig:ham} illustrates a typical situation where $H_1$ has a
flat portion at its min, while $H_2$ is strictly convex. Here, $(x,p')$ is fixed
and $s$ is the variable.

\begin{figure}[!ht]
\begin{center}
    \includegraphics[width=15cm]{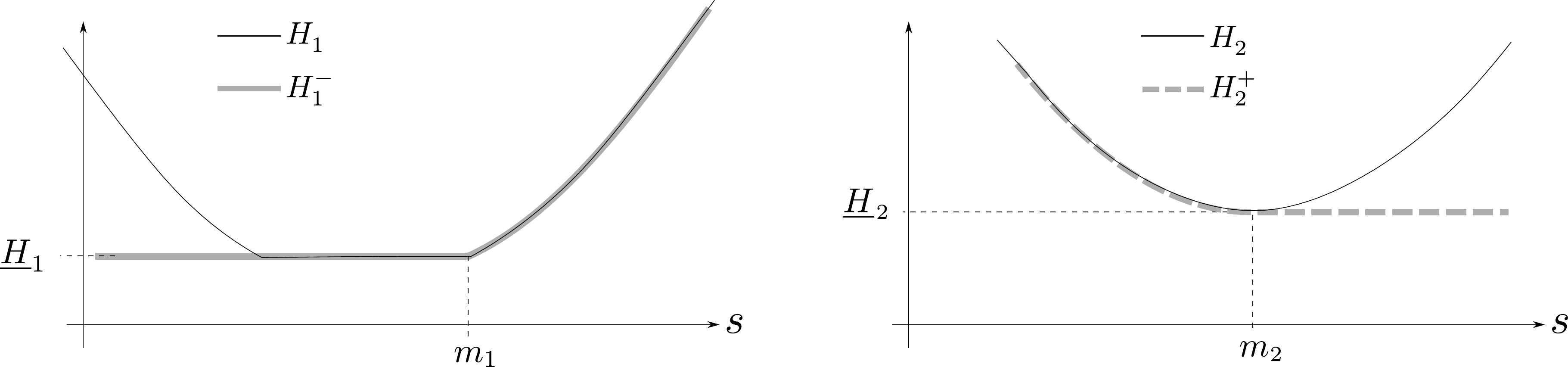}
\end{center}
\caption{Typical situation.}
\label{fig:ham}
\end{figure}

\begin{proof}
 We provide the proof for $H_1$ only, since it is the same for $H_2$. Notice first
    that obviously, by definition $H_1=\max(H_1^-;H_1^+)$. 

    Next, the minimum of the convex, coercive function
    $s\mapsto H_1(x,p'+se_N)$ is achieved at some $\bar s \in \R$ and
    then standard results of convex analysis show that the maximum
    which defines $H_1(x,p'+\bar se_N)$ is attained for a control
    $\alpha_*\in A_1$ such that $b_1(x,\alpha_*)\cdot e_N=0$. Hence we
    can use this specific control in the supremum for $H_1^-(x,p)$ and
    we deduce that $H_1^-(x,p)\geq \Honemin$. A small modification of
    this argument shows also that $H_1^+(x,p)\geq\Honemin$ (we need to
    add a little bit of controlability here because the supremum for
    $H_1^+$ requires $b\cdot e_N>0$, not $b\cdot e_N=0$).
    
    Then, we have $H_1^-(x,p)\leq \Honemin$ if $p_N\leq m_1$
    since $s\mapsto H^-_1(x,p'+se_N)$ is increasing, and a similar
    argument shows that for $p_N\geq m_1$,
    $H^+_1(x,p)\leq \Honemin$. Hence we deduce that
    $$H_1(x,p)=\begin{cases}
        H_1^+(x,p) & \text{if }p_N\leq m_1\,,\\
        H_1^-(x,p) & \text{if }p_N>m_1\,.
    \end{cases}$$
 For $p_N >m_1$,  the convex function $p_N \mapsto H_1(x,p'+p_N e_N)$ cannot have $0$ in its subdifferential (otherwise at such a point we would have a minimum point, which would contradict the definition of $m_1$) and therefore by the classical Mean Value Theorem for convex functions in $1-d$, this function is increasing for $p_N >m_1$.
\end{proof}

For any $x \in \R^N$, $p \in \R^N$ we define the Hamiltonians 
\begin{equation}  \label{defHti}
\Hti(x,p):= \max( H_1(x,p), H_2(x,p) )
\end{equation}
\begin{equation}   \label{defHtireg} 
\Htireg(x,p):= \max(H_{1}^- (x,p),H_{2}^+ (x,p))\; .
\end{equation}
\begin{rem} \label{lemCOE} We notice that the Hamiltonian $\Hti$ is
  convex and coercive in the $p$-variable (since it is  the
  maximum of two convex and coercive Hamiltonians). Moreover the same
  properties hold for $\Htireg$ thanks to the structure of $H_{1}^-$
  and $H_{2}^+$ proved in Lemma \ref{lem:H1m.H2p}.
\end{rem}
We recall that the Hamiltonians $\Hti$ and $\Htireg$ are convex and coercive in the $p$-variable (since we are taking the maximum of two convex
Hamiltonians). Moreover, we have
    \begin{equation} \label{HT1}
    \HT(x,p^\prime)= \min_{s\in\R} \Hti(x,p^\prime+s e_N)\,,
    \end{equation}
    \begin{equation} \label{HT2}
        \HTreg(x,p^\prime)= \min_{s\in\R} \Htireg(x,p'+s e_N)\,.
    \end{equation}
Indeed, equalities \eqref{HT1}-\eqref{HT2} follow from the definition of
$\HT$ and $\HTreg$ (see Remark \ref{remHpm}) and classical results in
convex analysis. For a detailed similar argument see the proof of
Theorem 3.3 case 1 in \cite{BBC1}.

The next step consists in introducing the function
\begin{equation}   \label{def-Phi} 
\phi(\lambda):=\Htireg(x,p'+\lambda e_N)\,.
\end{equation}
We are going to describe the different types of situations for this function $\phi$ and the consequences for the values of $ \HT(x,p^\prime)$,   $\HTreg(x,p^\prime)$ and for the equations
\begin{equation}   \label{def-EqnProof} 
H^-_1(x,p^\prime+ \lambda_1 e_N)=A \quad \mbox{ and } \quad
H^+_2(x,p^\prime+ \lambda_2 e_N)=A,
\end{equation} 
which appear in the proof of Theorem~\ref{comp-IM}. To do so, we introduce the functions 
$f_1(s):=H_1^-(x,p'+s e_N)$ and $f_2(s):= H_2^+(x,p'+s e_N)$. Since $f_1(s) \to
+\infty$ as $s\to +\infty$ and remains bounded as $s\to -\infty$, while $f_2(s)
\to +\infty$ as $s\to -\infty$ and remains bounded as $s\to +\infty$  (see
Figure \ref{fig:ham}), there exists at least a solution of the equation
$f_1(s)=f_2(s)$ and we denote by $s_*$ the minimal solution. 
By the monotonicity
properties of $f_1$ and $f_2$, it follows that $f_2>f_1$ for $s <s^*$ while
$f_2\leq f_1$ for $s\geq s^*$. Taking into account the flat portions of $H_1^-$
and $H_2^+$ where they reach their respective minimum, 
we arrive at the following complete description.
\begin{lem} \label{linkHami}\ \\[2mm]
    \noindent $(i)$  There are three possible configurations.
    \begin{itemize}
        \item[ ] {\bf Case 1 :} $s_* \leq  m_1$ and $s_* \leq m_2$ where (see
            Fig. \ref{fig_2}) 
    \begin{equation}\label{eq:phi.lambda1}
      \phi(\lambda)=
      \begin{cases}
          H_2^+(x,p'+\lambda e_N) & \text{if }\lambda<s_*\,,\\
          \Honemin (x,p') & \text{if }\lambda\in[s_*,m_1]\,,\\
          H_1^-(x,p'+\lambda e_N) & \text{if }\lambda>m_1\,.\\
      \end{cases}
    \end{equation}
\item[ ]{\bf Case 2 :} $s_* > m_1$ and $s_* \geq m_2$ where
    \begin{equation}\label{eq:phi.lambda2}
      \phi(\lambda)=
      \begin{cases}
          H_2^+(x,p'+\lambda e_N) & \text{if }\lambda\leq m_2\,,\\
           \Htwomin (x,p')
          & \text{if }\lambda\in[m_2,s_*]\,,  \\
          H_1^-(x,p'+\lambda e_N) & \text{if }\lambda\geq s_*\,.\\
      \end{cases}
    \end{equation}
\item[ ]{\bf Case 3 :} $s_* > m_1$ and $s_* \leq m_2$ where (see Fig.
    \ref{fig_1})
    \begin{equation}\label{eq:phi.lambda3}
      \phi(\lambda)=
      \begin{cases}
          H_2^+(x,p'+\lambda e_N) & \text{if }\lambda<s_*\,,\\
          H_1^-(x,p'+\lambda e_N) & \text{if }\lambda>s_*\,.\\
      \end{cases}
    \end{equation}
\end{itemize}

\noindent $(ii)$ In Cases 1 \& 2, we have
$$ \HTreg(x,p^\prime)= \max(\Honemin (x,p'),\Htwomin (x,p'))\; ,$$
while, in Case 3, $ \HTreg(x,p^\prime)=\HT(x,p^\prime)$.

\noindent $(iii)$ Finally, for any
$A > \max(\Honemin (x,p'),\Htwomin (x,p'))$ there exist a unique pair
$\lambda_2<\lambda_1$ such that
$$
H^-_1(x,p^\prime+ \lambda_1 e_N)=A \quad \mbox{ and } \quad
H^+_2(x,p^\prime+ \lambda_2 e_N)=A
$$
and the same equations hold with $H_1$ and $H_2$ instead of $H^-_1$
and $H_2^+$.
\end{lem}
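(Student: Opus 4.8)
The plan is to reduce everything to a one-variable analysis of the two monotone functions $f_1(s)=H_1^-(x,p'+se_N)$ and $f_2(s)=H_2^+(x,p'+se_N)$ and of the single crossing abscissa $s_*$: by definition $\phi=\max(f_1,f_2)$, and by \eqref{HT2} the quantity $\Htireg(x,p')$ is nothing but $\min_\lambda\phi(\lambda)$. Throughout I would use the explicit two–piece shapes from Lemma~\ref{lem:H1m.H2p}, namely $f_1\equiv\Honemin$ on $(-\infty,m_1]$ and $f_1=H_1$ strictly increasing on $(m_1,+\infty)$, while $f_2=H_2$ strictly decreasing on $(-\infty,m_2)$ and $f_2\equiv\Htwomin$ on $[m_2,+\infty)$, together with the fact that $g:=f_2-f_1$ is nonincreasing, positive on $(-\infty,s_*)$ and nonpositive on $[s_*,+\infty)$ (so its zero set is an interval $[s_*,s^{**}]$ on which $\phi=f_1=f_2$ attains its minimum).

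For $(i)$ I would first record that $\phi=f_2$ on $\{\lambda<s_*\}$ and $\phi=f_1$ on $\{\lambda\ge s_*\}$, and then refine this splitting according to the position of $s_*$ relative to $m_1,m_2$, substituting the appropriate constant or monotone piece of $f_1$ and $f_2$; this produces directly the formulas \eqref{eq:phi.lambda1}--\eqref{eq:phi.lambda3}. The only genuinely combinatorial point is \emph{exhaustiveness}, i.e.\ ruling out the fourth combination $m_2<s_*\le m_1$. I would argue by contradiction: if $s_*>m_2$ then $f_2(s_*)=\Htwomin$, hence $f_1(s_*)=\Htwomin$ at the crossing; if moreover $s_*\le m_1$ then $f_1\equiv\Honemin$ on $(-\infty,s_*]$, forcing $\Honemin=\Htwomin$ and therefore $f_1\equiv f_2$ on the whole interval $(m_2,s_*)$, contradicting the minimality of $s_*$. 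Thus $s_*>m_2\Rightarrow s_*>m_1$, and the three listed cases are the only ones.

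For $(ii)$ I would compute $\min_\lambda\phi$. In Cases 1 and 2 the minimum is attained on a flat plateau (equal to $\Honemin$, resp.\ $\Htwomin$), and the crossing identity $f_1(s_*)=f_2(s_*)$ pins down which plateau dominates: in Case 1, $s_*\le m_1$ gives $f_1(s_*)=\Honemin$ and $s_*\le m_2$ gives $f_2(s_*)\ge\Htwomin$, hence $\Honemin\ge\Htwomin$ and $\min\phi=\Honemin=\max(\Honemin,\Htwomin)$; Case 2 is symmetric. In Case 3 the minimum is attained at $s_*$, where $f_1=f_2$ with $s_*>m_1$ and $s_*\le m_2$, so there $H_1^-=H_1$ and $H_2^+=H_2$, whence $\phi(s_*)=\Hti(x,p'+s_*e_N)\ge\Hti(x,p')$; combined with the always-valid inequality $\Htireg\le\Hti$ this forces $\Htireg(x,p')=\Hti(x,p')=\HT(x,p')$.

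For $(iii)$, existence and uniqueness of $\lambda_1,\lambda_2$ are immediate from strict monotonicity: since $A>\Honemin$ there is a unique $\lambda_1>m_1$ with $f_1(\lambda_1)=A$, and since $A>\Htwomin$ a unique $\lambda_2<m_2$ with $f_2(\lambda_2)=A$; because $\lambda_1>m_1$ and $\lambda_2<m_2$, Lemma~\ref{lem:H1m.H2p} gives $H_1^-=H_1$ at $\lambda_1$ and $H_2^+=H_2$ at $\lambda_2$, which is the ``same equations'' clause. The delicate step—the one I expect to be the real obstacle—is the ordering $\lambda_2<\lambda_1$. I would argue by contradiction: if $\lambda_1\le\lambda_2$, then necessarily $m_1<\lambda_1\le\lambda_2<m_2$, and monotonicity of $f_1,f_2$ yields $g(\lambda_1)\ge0\ge g(\lambda_2)$; the monotonicity of $g$ then forces $A=f_1(\lambda_1)\le\phi(s_*)=\Htireg(x,p')$. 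Hence the ordering holds exactly when $A>\Htireg(x,p')$, which is automatic under the stated hypothesis $A>\max(\Honemin,\Htwomin)$ in Cases 1 and 2 (where $\Htireg=\max(\Honemin,\Htwomin)$ by $(ii)$), while in Case 3 it genuinely requires the stronger $A>\Htireg(x,p')=\Hti(x,p')$—precisely the form in which the lemma is actually invoked (compare the use of $A>\HTreg$ in the proof of Proposition~\ref{pro}). I would therefore establish the ordering under $A>\Htireg(x,p')$ and record that it reduces to $A>\max(\Honemin,\Htwomin)$ in the first two configurations.
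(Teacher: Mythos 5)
Your argument is correct, and for parts $(i)$ and $(ii)$ it follows exactly the route the paper intends: the paper itself offers no written proof of Lemma~\ref{linkHami} beyond the preparatory paragraph introducing $f_1,f_2,s_*$ and the two figures, so your contribution is to make that sketch rigorous. The reduction $\phi=\max(f_1,f_2)$, the identification of the zero set of $g=f_2-f_1$ as an interval $[s_*,s^{**}]$ on which $\phi$ attains its minimum $\phi(s_*)$, and the plateau comparison $f_1(s_*)=f_2(s_*)\geq\Htwomin$ in Case~1 (resp.\ its mirror in Case~2) are precisely what the figures are meant to convey. Your exhaustiveness argument ruling out the fourth configuration $m_2<s_*\leq m_1$ (it would force $\Honemin=\Htwomin$ and hence a continuum of crossings below $s_*$, contradicting minimality) is a genuine addition: the paper simply lists three cases without justifying that they cover all possibilities.

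More importantly, your diagnosis of part $(iii)$ is correct: as printed, the hypothesis $A>\max(\Honemin(x,p'),\Htwomin(x,p'))$ does not yield $\lambda_2<\lambda_1$ in Case~3. Indeed, if $m_1<s_*<m_2$ then strict monotonicity gives $\max(\Honemin,\Htwomin)<\phi(s_*)=\HTreg(x,p')$, and for any $A$ strictly between these two values the unique solutions satisfy $\lambda_1<s_*<\lambda_2$, i.e.\ the opposite order (for instance $H_1(x,p'+se_N)=|s|$, $H_2(x,p'+se_N)=|s-10|$, $A=3$ gives $\lambda_1=3$, $\lambda_2=7$). The correct hypothesis for the ordering is $A>\HTreg(x,p')$, exactly as you propose; this is the form actually available where the ordering matters (Case~5 of Theorem~\ref{teo:viscous} and the proof of Proposition~\ref{pro}), whereas in the proof of Theorem~\ref{comp-IM} only $A>\max(\Honemin,\Htwomin)$ is established but the relative order of $\lambda_1,\lambda_2$ is never used, so existence and uniqueness suffice there. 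Your contradiction argument for the ordering ($\lambda_1\leq\lambda_2$ forces $g(\lambda_1)\geq0\geq g(\lambda_2)$ and hence $A=f_1(\lambda_1)\leq\phi(s_*)$) is sound. One cosmetic point: you write $\Htireg(x,p')$ and $\Hti(x,p')$ for quantities that are really $\HTreg(x,p')=\min_s\Htireg(x,p'+se_N)$ and $\HT(x,p')$; keep the distinction between the Hamiltonians on $\R^N$ and the tangential ones on $\R^{N-1}$, since $\Htireg(x,(p',0))=\phi(0)$ is not $\min_\lambda\phi(\lambda)$.
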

\begin{figure}[!ht]
\begin{center}
 \includegraphics[width=10cm]{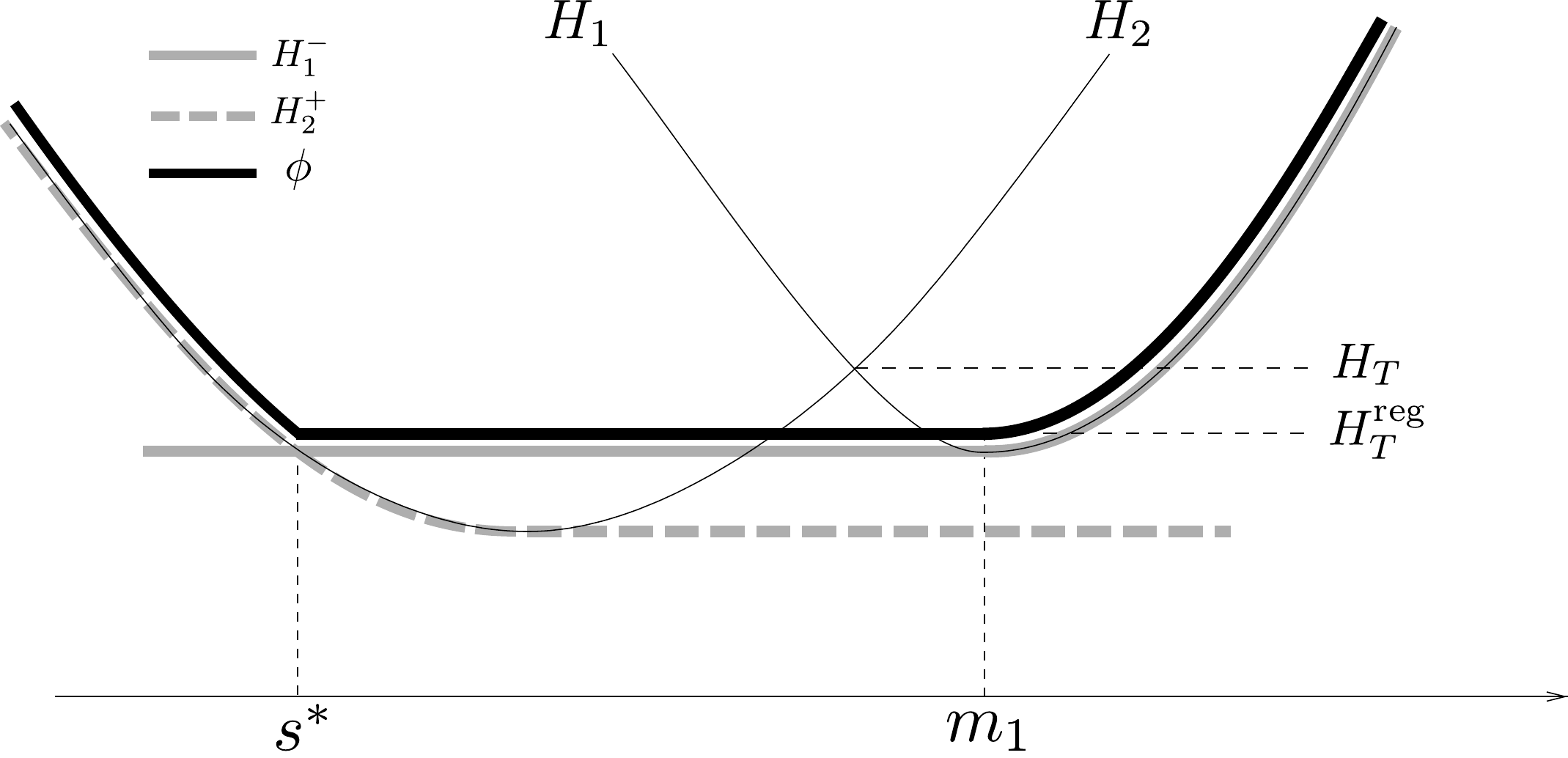}
\end{center}
\caption{$\HT(x,p^\prime)> \HTreg(x,p^\prime)=\Htwomin(x,p')$.}
\label{fig_2}
\end{figure}

\begin{figure}[!h]
\begin{center}
    \includegraphics[width=10cm]{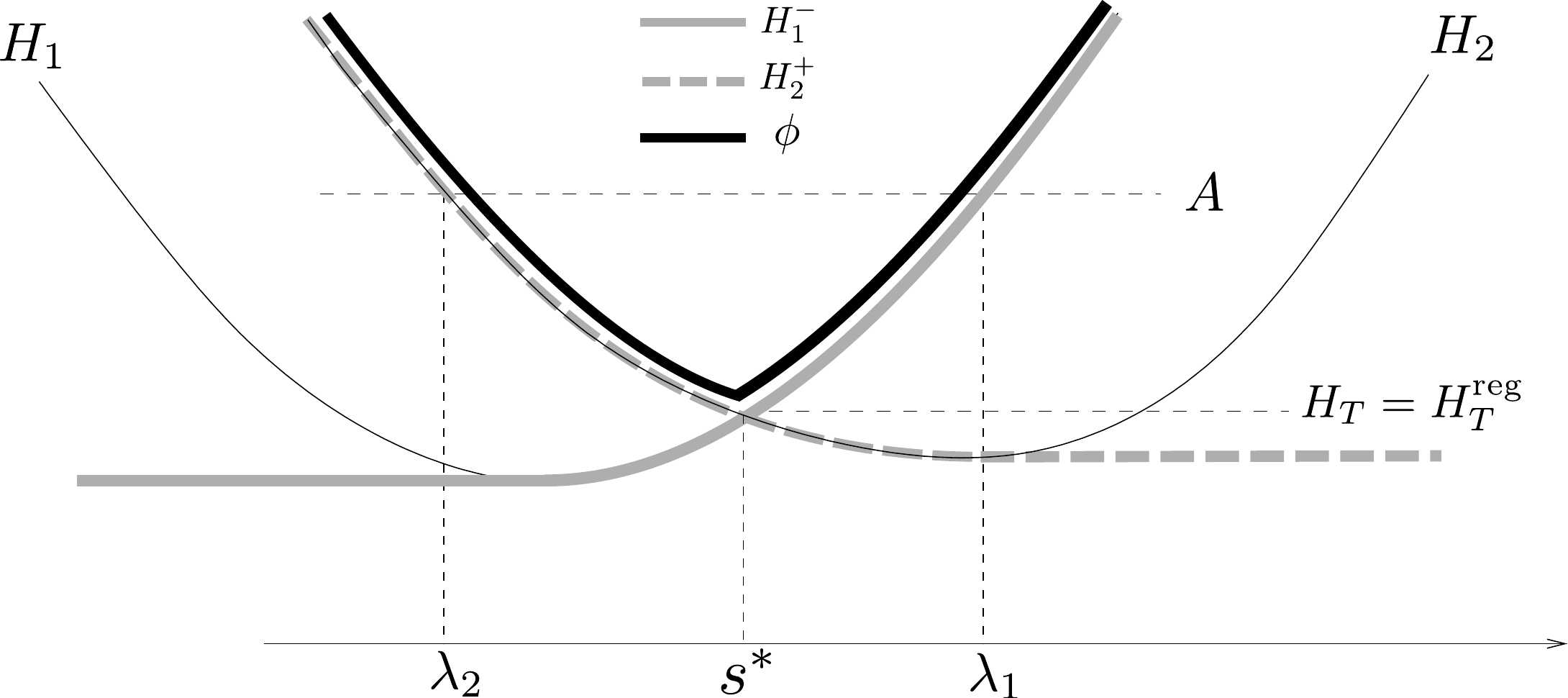}
\end{center}
\caption{$\HT(x,p^\prime)=\HTreg(x,p^\prime)$.}\label{fig_1}
\end{figure}

\bibliographystyle{plain}
\bibliography{bbcim}

\end{document}